\documentclass[reqno,11pt,a4paper]{amsart}

\usepackage{amsmath, amsthm, amsfonts, amssymb, color}

\usepackage{textpos}

\usepackage{mathtools}
\mathtoolsset{showonlyrefs}

%\usepackage{chngcntr}
%\counterwithout{subsection}{section}

\usepackage[applemac]{inputenc}

\theoremstyle{plain}
\newtheorem{theorem}{Theorem}
\newtheorem{proposition}[theorem]{Proposition}
\newtheorem{lemma}[theorem]{Lemma}
\newtheorem{corollary}[theorem]{Corollary}

\theoremstyle{definition}
\newtheorem{example}[theorem]{Example}

\theoremstyle{remark}
\newtheorem{remark}[theorem]{Remark}

\def\R{\mathbb{R}}	
\renewcommand{\leq}{\leqslant} 		
\renewcommand{\geq}{\geqslant}

\def\cA{\mathcal{A}}
\def\cB{\mathcal{B}}

\def\hcA{\hat{\cA}}
\def\cF{\mathcal{F}}
\def\hcF{\hat{\cF}}

\def\cV{\mathcal{V}}
\DeclareMathOperator{\der}{Der}
\def\ad{\mathrm{ad}}

%new
\def\fp{\mathfrak{p}}
\def\cW{\mathcal{W}}

\begin{document}

\title[Normal forms of Poisson brackets]{Normal forms of dispersive scalar Poisson brackets with two independent variables}
\author{Guido Carlet}
\address{Korteweg-de Vries Instituut voor Wiskunde, 
Universiteit van Amsterdam, Postbus 94248,
1090GE Amsterdam, Nederland}
\email{g.carlet@uva.nl}

\author{Matteo Casati}
\address{Marie Curie fellow of the Istituto Nazionale di Alta Matematica, Department of Mathematical Sciences,
Loughborough University,
Loughborough LE11 3TU, United Kingdom}
\email{M.Casati@lboro.ac.uk}

\author{Sergey Shadrin}
\address{Korteweg-de Vries Instituut voor Wiskunde, 
Universiteit van Amsterdam, Postbus 94248,
1090GE Amsterdam, Nederland}
\email{s.shadrin@uva.nl}

\date{}
\begin{abstract}
We classify the dispersive Poisson brackets with one dependent variable and two independent variables, with leading order of hydrodynamic type, up to Miura transformations. We show that, in contrast to the case of a single independent variable for which a well known triviality result exists, the Miura equivalence classes are parametrised by an infinite number of constants, which we call numerical invariants of the brackets. We obtain explicit formulas for the first few numerical invariants. 
\end{abstract}
\maketitle

%
%\begin{textblock*}{8cm}(10cm,-7.8cm)
 %  \fbox{\footnotesize \sc \jobname.tex}
%\end{textblock*}
%

%\tableofcontents

\section*{Introduction}

Let $\cA$ be the space of differential polynomials in the variable $u$, i.e., formal power series in the variables $\partial_{x^1}^{k_1}\partial_{x^2}^{k_2} u$ with coefficients which are smooth functions of $u$:
\begin{equation}
\cA = C^\infty (U) [[ \{ u^{(k_1,k_2)} =\partial_{x^1}^{k_1} \partial_{x^2}^{k_2} u \text{ with } k_1, k_2 \geq0 , \,(k_1,k_2 ) \not= (0,0) \} ]],
\end{equation} 
for $U \subset\R$.
The standard degree $\deg$ on $\cA$ counts the number of derivatives $\partial_{x^1}$, $\partial_{x^2}$ in a monomial, i.e., it is defined by $\deg ( \partial_{x^1}^{k_1} \partial_{x^2}^{k_2} u) = k_1 +  k_2$.

In this paper we classify, up to Miura transformations, the dispersive Poisson brackets with one dependent variable $u$ and two independent variables $x^1$, $x^2$ of the form
\begin{align} \label{def}
\{ &u(x^1,x^2) ,u(y^1,y^2) \} = \{ u(x^1,x^2) , u(y^1,y^2) \}^0 +\\ &+\sum_{k>0} \epsilon^k \sum_{\substack{k_1,k_2 \geq0 \\ k_1 +k_2 \leq k+1}}  A_{k; k_1, k_2}(u(x)) \delta^{(k_1)}(x^1-y^1)  \delta^{(k_2)}(x^2-y^2)  \notag
\end{align}
where $A_{k; k_1,k_2} \in \cA$ and $\deg A_{k; k_1,k_2} = k - k_1 - k_2 +1$.

The leading term $ \{ u(x^1,x^2) , u(y^1,y^2) \}^0$ is a (scalar, two-dimensional) Poisson bracket of Dubrovin-Novikov (or hydrodynamic) type~\cite{dn83, dn84}, in other words it is of the form
\begin{equation}
 \{ u(x^1,x^2) , u(y^1,y^2) \}^0 = \sum_{i=1}^2 
\left[ g^i(u(x)) \partial_{x^i} +
b^i(u(x)) \partial_{x^i} u(x) \right] 
\delta(x^1-y^1)  \delta(x^2-y^2) ,
\end{equation}
which we assume to be non-degenerate.

The conditions imposed on the functions $g^i(u)$ and $b^i(u)$ by the requirement that $\{ , \}^0$ is skew-symmetric and satisfies the  Jacobi identity have been studied by several authors~\cite{m88, m08, fls15}. We require the additional condition that the bracket is non-degenerate, namely that the bracket does not vanish for any value of the function $u(x)$. 
In the specific case considered here, where there is a single dependent variable and two independent variables, such conditions guarantee the existence of a change of coordinates in the dependent variable (a Miura transformation of the first kind), to a flat coordinate that we still denote with $u$, in which the bracket assumes the form
\begin{align}
\{ u(x^1,x^2) , u(y^1,y^2) \}^0 &= c^1 \delta^{(1)}(x^1-y^1)  \delta(x^2-y^2) + \\ \notag
&+ c^2 \delta(x^1-y^1)  \delta^{(1)}(x^2-y^2) .
\end{align}
We can moreover perform (see~\cite{ccs15}) a linear change in the independent variables $x^1$, $x^2$ such that the Poisson bracket assumes the standard form
\begin{equation} \label{leading}
 \{ u(x^1,x^2) , u(y^1,y^2) \}^0 = \delta (x^1-y^1)  \delta^{(1)}(x^2-y^2) .
\end{equation}

The Miura transformations (of the second kind~\cite{lz11}) are changes of variable of the form
\begin{equation} \label{miura}
v = u + \sum_{k\geq1} \epsilon^k F_k
\end{equation}
where $F_k \in \cA$ and $\deg F_k= k$. They form a group called Miura group. We say that two Poisson brackets which are mapped to each other by a Miura transformation are Miura equivalent.

As follows from the discussion so far, the classification of dispersive Poisson brackets of the form~\eqref{def} (with non-degeneracy condition) under Miura transformations~\eqref{miura}, diffeomorphisms of the dependent variable and linear changes of the independent variables reduces to the problem of finding the normal forms of the equivalence classes under Miura transformations of the second kind~\eqref{miura} of the Poisson brackets~\eqref{def} with leading term~\eqref{leading}.

We solve this problem in our main result:
\begin{theorem} \label{maintheorem}
The normal form of Poisson brackets~\eqref{def} with leading term~\eqref{leading} under Miura transformations of the second kind  is given by
\begin{align} \label{normalform}
\{ u(x^1,x^2) ,u(y^1,y^2) \} &=  \delta (x^1-y^1)  \delta^{(1)}(x^2-y^2)+\notag \\
&+ \sum_{k \geq 1} \epsilon^{2k+1} c_k \delta^{(2k+1)}(x^1-y^1)  \delta(x^2-y^2)
\end{align}
for a sequence of constants  $c=(c_1, c_2, \dots )$.
\end{theorem}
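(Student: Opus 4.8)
\subsection*{Proof strategy}

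The strategy is to translate \eqref{def} into the deformation theory of a Poisson bivector and to reduce Theorem~\ref{maintheorem} to the computation of one Poisson cohomology group. A bracket of the form \eqref{def} is the same as a local bivector field $P = P_0 + \sum_{k \geq 1} \epsilon^k P_k$, with $P_0$ the bivector of \eqref{leading}; in this language skew-symmetry is automatic, the Jacobi identity is the Maurer--Cartan equation $[P,P]=0$ for the Schouten--Nijenhuis bracket, and a Miura transformation of the second kind \eqref{miura} is realised as $P \mapsto e^{\ad_X}P$ for an evolutionary vector field $X = \sum_{k\geq1}\epsilon^k X_k$ (see \cite{lz11}). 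The group acts at order $k$ by adding the $d_{P_0}$-coboundary $[X_k,P_0]$, where $d_{P_0} := [P_0,\cdot\,]$ is the Poisson differential on the complex of local polyvector fields. The elementary but crucial observation is that constant-coefficient bivectors Schouten-commute: if $P_1,\dots,P_{k-1}$ have already been reduced to constant coefficients, the order-$\epsilon^k$ part of $[P,P]=0$ collapses to $[P_0,P_k]=0$, so $P_k$ is automatically a $d_{P_0}$-cocycle and no obstruction arises (in particular the brackets \eqref{normalform}, being sums of constant-coefficient bivectors, are all Poisson). Thus Theorem~\ref{maintheorem} follows once we identify the degree-$\geq1$ part of $H^2$ of $(\text{local polyvectors},\, d_{P_0})$: we must show it is spanned by the classes of the constant-coefficient bivectors $\delta^{(2k+1)}(x^1-y^1)\delta(x^2-y^2)$, and is zero in the remaining degrees.

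The heart of the proof is this cohomology computation, and it is where I expect the main difficulty. I would work in the $\theta$-formalism, representing $p$-vectors by differential polynomials of $\theta$-degree $p$ in $u$ and in odd variables $\theta^{(k_1,k_2)}$, taken modulo the total derivatives $\partial_{x^1}$ and $\partial_{x^2}$; the two-variable setting forces care here, since the quotient by two total derivatives is genuinely subtler than in one variable and is best organised through a $(\partial_{x^1},\partial_{x^2})$-type double complex. The structural fact that makes the computation tractable is that $P_0$ corresponds to the operator $\partial_{x^2}$ alone, so $d_{P_0}$ never creates $\partial_{x^1}$-derivatives: the complex is filtered by the total number of $\partial_{x^1}$-derivatives, $d_{P_0}$ respects this filtration, and the associated graded is, in each $\partial_{x^1}$-degree, a single-variable complex for $\partial_{x^2}$. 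By the triviality of dispersive deformations in one independent variable alluded to in the introduction, the cohomology of that one-variable complex is concentrated in $x^2$-degree zero, so after running the spectral sequence only the polyvectors free of $\partial_{x^2}$-derivatives can contribute. Analysing these purely-$x^1$ polyvectors, one finds that in $\theta$-degree two the survivors are, up to antisymmetry, the bivectors $\int\theta\,\theta^{(2k+1,0)}\,dx$ (the even-order ones vanish by antisymmetry), each a nonzero class because any $\partial_{x^2}$-coboundary must contain a $\partial_{x^2}$; among these, the ones realisable within the class \eqref{def} are precisely those appearing in \eqref{normalform}.

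Granting the cohomology, Theorem~\ref{maintheorem} is proved by induction on the order in $\epsilon$. Suppose a Miura transformation of the second kind has already brought the bracket to $P_0 + \sum_{j=1}^{k-1}\epsilon^j \bar P_j + \epsilon^k P_k + O(\epsilon^{k+1})$ with the $\bar P_j$ the normal-form constant-coefficient terms. By the observation above $[P_0,P_k]=0$, so $P_k$ is a cocycle; fixing once and for all a linear splitting of $d_{P_0}$-cocycles into coboundaries and distinguished representatives, write $P_k = [P_0, X_k] + \bar P_k$ with $\bar P_k = c\,\delta^{(2k+1)}(x^1-y^1)\delta(x^2-y^2)$ in the appropriate degree and $\bar P_k=0$ otherwise; composing with $e^{-\ad_{\epsilon^k X_k}}$ removes $[P_0,X_k]$ without disturbing the already-normalised lower orders. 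Iterating and passing to the limit gives \eqref{normalform}. It remains to check that $c=(c_1,c_2,\dots)$ is a genuine invariant, i.e.\ that the normal form is unique: if $e^{\ad_X}$ carries one normal form to another, comparing them at the lowest order $k$ where they could differ shows that the difference of the distinguished representatives equals $[X_k,P_0]$ when $X$ starts exactly at order $k$, hence is simultaneously a coboundary and a distinguished representative, so it vanishes; if $X$ starts at some lower order $m<k$ one first absorbs the cocycle $X_m$, a positive-degree symmetry of $P_0$, which needs a short supplementary argument controlling such symmetries order by order. This last point, together with an explicit choice of representatives, is also what produces the closed formulas for the first few $c_k$. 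The principal obstacle throughout is the cohomology computation of the second paragraph, and especially the correct treatment of the quotient by two total derivatives, which has no counterpart in the one-dimensional theory.
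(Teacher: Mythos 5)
Your reduction to a cohomology computation is the right frame, and parts i and the induction scheme for part iii are set up correctly, but the computation you assert for $H^2$ is wrong, and this is not a detail: it is precisely where the two-dimensional problem differs from the one-dimensional one. The quotient by the two total derivatives produces, via the connecting (Bockstein) homomorphism of the long exact sequence associated to $0 \to \hcF_1/\R \xrightarrow{\partial_y} \hcF_1 \xrightarrow{\int dy} \hcF \to 0$, an extra summand in every cohomology group: one finds
$H^2_d(\hcF) = \left(\Theta/\partial_x\Theta\right)^2_d \oplus \cB\left(\Theta/\partial_x\Theta\right)^3_d$,
where $\cB=\sum_i u^{(i,0)}\partial/\partial\theta^{(i,0)}$ splits the sequence. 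The second summand is nonzero already in low degree (e.g.\ $u^{(2,0)}\theta^{(1,0)}\theta^{(0,0)}-u^{(1,0)}\theta^{(2,0)}\theta^{(0,0)}+u^{(2,0)}\theta^{(2,0)}\theta^{(1,0)}$-type classes, linear in the jet variables $u^{(s,0)}$ and free of $y$-derivatives), so your claim that the only purely-$x^1$ survivors in $\theta$-degree two are the constant-coefficient bivectors $\int\theta\,\theta^{(2k+1,0)}$ is false. Your spectral-sequence heuristic detects the $E_1$-term but misses the nontrivial extension/connecting map, which is exactly the ``subtler quotient by two total derivatives'' you flag but do not resolve.

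Consequently your induction step breaks down: after killing the coboundary, the distinguished representative $\bar P_k$ is not forced to be a multiple of $\fp_{2k+1}$; a priori it contains a nontrivial Bockstein class $Q_k\in\cB(\Theta/\partial_x\Theta)^3_k$, which no Miura transformation can remove. Ruling these classes out requires going beyond the linearised deformation theory: one must use the quadratic part of $[P,P]=0$ at the next order, which (after separating terms by $y$-degree and by degree in the $u^{(s,t)}$) yields $[Q_{k+1},Q_{k+1}]=0$, and then prove the purely algebraic statement that for $\chi\in(\Theta/\partial_x\Theta)^3$ the vanishing of $[\cB(\chi),\cB(\chi)]$ forces $\chi=0$. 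This reduces to showing that a square $\alpha^2$ of an element $\alpha\in\Theta^2_k$ of the odd polynomial ring is never a nonzero $\partial_x$-exact element, which is genuinely delicate (the naive ``a derivative is linear in the top jet, hence not a square'' argument fails for anticommuting variables) and occupies the technical core of the argument. None of this appears in your proposal. A secondary, smaller gap: in the uniqueness step you need vanishing of $H^1$ in positive degree, but $H^1_d(\hcF)$ is nonzero for odd $d\geq 3$ (again by the Bockstein summand $\cB(\Theta/\partial_x\Theta)^2_d$); the argument must be arranged so that only the even-degree groups $H^1_{2t}(\hcF)$, which do vanish, are invoked.
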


\begin{remark} \label{steps}
By ``normal form'', in the main Theorem, we mean that: 
\begin{enumerate}
\item[i.]
for any choice of constants $c_k$ formula~\eqref{normalform} defines a Poisson bracket which is a deformation of~\eqref{leading}; 
\item[ii.]
two Poisson brackets of the form~\eqref{normalform} are Miura equivalent if and only if they are defined by the same constants $c_k$; 
\item[iii.]
and any Poisson bracket of the form~\eqref{def} can be brought to the normal form~\eqref{normalform} by a Miura transformation. 
\end{enumerate}
We call the constants $c_k$ the {\it numerical invariants} of the Poisson bracket.
\end{remark}

The deformation theory of Hamiltonian -- and, albeit not addressed in our paper, bi-Hamiltonian -- structures plays an important role in the classification of integrable Hamiltonian PDEs~\cite{dz01,dlz06}. Most results in this field have been obtained for $(1+1)$-dimensional systems, namely the ones that depend only on one space variable. 

The main result in this line of research is the triviality theorem~\cite{g02, dms02, dz01} of Poisson brackets of Dubrovin-Novikov type.
%The main result of that line of research is due to E.~Getzler~\cite{g02}, who proved the triviality of the Poisson-Lichnerowicz cohomology associated to the brackets of Dubrovin-Novikov type; independent results by Degiovanni et al.~\cite{dms02} focus on the problem of the deformations and obtain only the first and second cohomology groups.
Together with the classical results by Dubrovin and Novikov~\cite{dn83}, this allows to conclude that the  dispersive deformations of non-degenerate Dubrovin-Novikov brackets are classified by the signature of a pseudo-Riemannian metric.
Similarly, deformations of bi-Hamiltonian pencils~\cite{l02,b08} are parametrised by functions of one variable, the so-called central invariants~\cite{dlz06, dlz08}; in a few special cases, the corresponding biHamiltonian cohomology has been computed, in particular for scalar brackets~\cite{lz13, cps16, cps16-2}, and in the semi-simple $n$-component case~\cite{cps17, cks17} . The $(2+1)$-dimensional case is much less studied: the classification of the structures of hydrodynamic type has been completed up to the four-components case~\cite{fls15}, while in our recent paper~\cite{ccs15} we computed the Poisson cohomology for scalar -- namely, one-component -- brackets. Since such a cohomology is far from being trivial, the actual classification of the dispersive deformations of such brackets is a highly complicated task. We address and solve it in the present paper.

The outline of the paper is as follows: 
in Section 1 we quickly recall basic definitions and facts related with the theta formalism. 
In Section 2 we specialise some results from our previous work~\cite{ccs15} to the $D=2$ case to obtain an explicit description of the second Poisson cohomology. 
In Section 3 we prove our main result. The proof is split in three steps corresponding to the three parts in Remark~\ref{steps}. In \S\ref{techsec} we prove some technical lemmas that are required in the proof of Proposition~\ref{step3}. 
Finally in Section 4 we give an explicit expression of the first few numerical invariants of the Poisson bracket. 

\subsubsection*{Acknowledgment} 

We would like to thank Jenya Ferapontov for several useful observations and Dario Merzi for suggesting a clever identity in Example~\ref{eg}. 
G.~C. and S.~S. were supported by Netherlands Organisation for Scientific Research (NWO). 
M.~C. was supported by the INdAM-COFUND-2012 Marie Curie fellowship ``MPoisCoho -- Poisson cohomology of multidimensional Hamiltonian operators''.

\section{Theta  formalism}

We present here a short summary of the basic definitions of the theta formalism for local variational multivector fields, specialising the formulas to the scalar case with two independent variables, i.e., $N=1$, $D=2$. We refer the reader to~\cite{ccs15} for the general $N, D$ case. 

Let $\cA$ be the space of differential polynomials
\begin{equation}
\cA = C^\infty (\R) [[ \{ u^{(s,t)} , s, t \geq0 , \ (s, t) \not= (0,0) \} ]],
\end{equation}
where we denote $u^{(s,t)} = \partial_x^s \partial_y^t u$, and $C^\infty (\R)$ denotes the space of smooth functions in the variable $u$. 
The standard gradation $\deg$ on $\cA$ is given by $\deg u^{(s,t)} = s+t$. We denote $\cA_d$ the homogeneous component of degree $d$.
%
%On $\cA$ we define commuting derivations $\partial_{x}$, $\partial_y$  by 
%\begin{equation}
%\partial_{x^i} = \sum_{\alpha, S} u^{\alpha, S+\xi_i} \frac{\partial }{\partial u^{\alpha, S} }.
%\end{equation}

Using the standard derivations $\partial_x$ and $\partial_y$ on $\cA$, we define the space of local functionals as
\begin{equation}
\cF = \frac{\cA}{\partial_{x} \cA  + \partial_{y}\cA} ,
\end{equation}
and the projection map from $\cA$ to $\cF$ is denoted by a double integral, which associates to $f\in\cA$ the element
\begin{equation}
\int f \ dx \ dy
\end{equation}
in $\cF$. Moreover, we will denote by the partial integrals $\int dx$ , $\int dy$ the projections from $\cA$ to the quotient spaces $\cA/\partial_{x}\cA$, $\cA/\partial_{y}\cA$.

The variational derivative of  a local functional $F=\int f $ is defined as 
\begin{equation} \label{varu}
\frac{\delta F}{\delta u} = \sum_{s,t \geq0} (- \partial_x)^s (- \partial_y)^t\frac{\partial f}{\partial u^{(s,t)}} .
\end{equation}

A local $p$-vector $P$ is a linear $p$-alternating map from $\cF$ to itself of the form
\begin{equation} \label{pvect}
P(I_1, \dots ,I_p) = \int 
P_{(s_1,t_1), \dots , (s_p,t_p)}
 \ \partial_x^{s_1} \partial_y^{t_1}  \left( \frac{\delta I_1}{\delta u} \right) \cdots  \partial_x^{s_p} \partial_y^{t_p} \left( \frac{\delta I_p}{\delta u} \right)
 \ dx \ dy
\end{equation}
where $P_{(s_1,t_1), \dots , (s_p,t_p)}  \in \cA$, for arbitrary $I_1, \dots, I_p \in \cF$. We denote the space of local $p$-vectors by $\Lambda^p \subset \mathrm{Alt}^p(\cF, \cF)$.

Clearly an expression of the form~\eqref{def} defines a local bivector by the usual formula 
\begin{equation}
\{ I_1, I_2 \} = \int \frac{\delta I_1}{\delta u(x^1, y^1)} 
\{ u(x^1, y^1), u(x^2,y^2) \} 
\frac{\delta I_2}{\delta u(x^2,y^2)} 
\ dx^1 \ dy^1 \ dx^2 \ dy^2  
\end{equation}
which equals to 
\begin{equation}
\int \sum_{k\geq0} \epsilon^k \frac{\delta I_1}{\delta u(x,y)}
\sum_{\substack{s,t \geq0 \\ s +t \leq k+1}}  
A_{k; s, t}(u(x)) \partial_x^s \partial_y^t  \frac{\delta I_2}{\delta u(x,y)} \ dx \ dy.
\end{equation}

The theta formalism, introduced first in the context of formal calculus of variations in~\cite{g02}, can be easily extended to the multi-dimensional setting~\cite{ccs15}, and allows to treat the local multivectors in a more algebraic fashion.

We introduce the algebra $\hcA$ of formal power series in the commutative variables $u^{(s,t)}$ and anticommuting variables $\theta^{(s,t)}$, with coefficients given by smooth functions of $u$, i.e.,
%
%To use the $\theta$-formalism, and to avoid working with the densities of \eqref{pvect}, we introduce a new space. Let $\hcA$ be the algebra of formal power series in the commutative variables $u^{\alpha,S}$, $|S|>0$ and anticommutative variables 
%$\theta_\alpha^S$, $|S|\geq0$ with coefficients given by smooth functions of the variables $u^\alpha$, i.e.,
\begin{equation}
\hcA := C^\infty (\R) [[ \left\{ u^{(s,t)},(s,t)\not=(0,0) \right\}\cup \left\{ \theta^{(s,t)} \right\}]] .
\end{equation}
The standard gradation $\deg$ and the super gradation $\deg_\theta$ of $\hcA$ are defined by setting
\begin{equation}
\deg u^{(s,t)} =\deg \theta^{(s,t)} = s+t, \quad 
\deg_\theta u^{(s,t)} = 0 , \quad 
\deg_\theta \theta^{(s,t)} = 1 .
\end{equation}
We denote $\hcA_d$, resp. $\hcA^p$, the homogeneous components of standard degree $d$, resp. super degree $p$, while $\hcA_d^p := \hcA_d \cap \hcA^p$. Clearly $\hcA^0 =\cA$.
The derivations $\partial_x$ and $\partial_y$ are extended to $\hcA$ in the obvious way.

%The commuting derivations $\partial_{x^i}$ for $i=1, \dots , D$ are extended to $\hcA$ by 
%\begin{equation}
%\partial_{x^i} = \sum_{\alpha, S} \left( u^{\alpha, S+\xi_i} \frac{\partial }{\partial u^{\alpha, S} } + \theta_\alpha^{S+\xi_i} \frac{\partial }{\partial \theta_\alpha^S} \right).
%\end{equation}

We denote by $\hcF$ the quotient of $\hcA$ by the subspace $\partial_{x} \hcA + \partial_{y} \hcA$, and by a double integral $\int dx \ dy$ the projection map from $\hcA$ to $\hcF$. Since the derivations $\partial_x$, $\partial_y$ are homogeneous, $\hcF$ inherits both gradations of $\hcA$.  

It turns out, see Proposition 2 in~\cite{ccs15}, that the space of local multivectors $\Lambda^p$ is isomorphic to $\hcF^p$ for $p\not=1$, while $\Lambda^1$ is isomorphic to the quotient of $\hcF^1$ by the subspace of elements of the form $ \int (k_1 u^{(1,0)} + k_2 u^{(0,1)} ) \theta $ for two constants $k_1$, $k_2$. 
Moreover $\hcF^1$ is isomorphic to the space $\der'(\cA)$ of derivations of $\cA$ that commute with $\partial_x$ and $\partial_y$.

%\begin{proposition}[\cite{ccs15}, Proposition 2]
%The space of local multi-vectors $\Lambda^p$ is isomorphic to $\hcF^p$ for $p\not=1$. Moreover
%\begin{equation}
%\Lambda^1 
%\cong \frac{\hcF^1}{\oplus_i \R \int u^{\alpha, \xi_i} \theta_\alpha} 
%\cong \frac{\der'(\cA)}{\oplus_i \R \partial_{x^i}},
%\end{equation}
%where $\der'(\cA)$ denotes the space of derivations of $\cA$ that commute with $\partial_{x^i}$, for $i=1,\dots ,D$, and $\der'(\cA) \cong \hcF^1$.
%\end{proposition}

%\begin{remark}
%First of all, for $p=0$ the isomorphism is trivial, since $\hcF^0=\cF=\Lambda^0$. In general, this proposition allows us to replace the standard formalism for local multivector fields, involving Dirac's $\delta$ functions, with the so-called $\theta$-formalism introduced by Getzler \cite{g02} and extended to the multi-dimensional setting in our previous work \cite{ccs15}.
%\end{remark}

%\subsection{The Schouten-Nijenhuis bracket}

The Schouten-Nijenhuis bracket 
\begin{equation}
[\phantom{A},\phantom{A}]:\hcF^p \times \hcF^q \to \hcF^{p+q-1}
\end{equation}
is defined as  
\begin{equation}\label{defsch}
[P , Q ] = \int^D \left( \frac{\delta P}{\delta\theta} \frac{\delta Q}{\delta u} + (-1)^p \frac{\delta P}{\delta u} \frac{\delta Q}{\delta \theta} \right) \ dx \ dy ,
\end{equation}
where the variational derivative with respect to $\theta$ is defined as
\begin{equation}
\frac{\delta}{\delta \theta} = \sum_{s,t\geq0} (-\partial_x)^s (-\partial_y)^t \frac{\delta}{\delta \theta^{(s,t)}} .
\end{equation}

It is a bilinear map that satisfies the graded symmetry 
\begin{equation}
[P,Q] = (-1)^{pq} [Q,P]
\end{equation}
and the graded Jacobi identity
\begin{equation}
(-1)^{pr} [[P,Q],R] + (-1)^{qp} [[Q,R],P] + (-1)^{rq} [[R,P],Q] =0 
\end{equation}
for arbitrary $P\in\hcF^p$, $Q\in\hcF^q$ and $r\in\hcF^r$.

%\begin{remark}
%From the properties of the variational derivative $\delta_u\partial=\delta_\theta\partial=0$, we can define the Schouten-Nijenhuis bracket as an operation $\hcA^p\times\hcA^q\to\hcF^{p+q-1}$.
%\end{remark}
A bivector $P\in\hcF^2$ is a Poisson structure when $[P,P]=0$. In such case $d_P := ad_P = [P, \cdot ]$ squares to zero, as a consequence of the graded Jacobi identity, and the cohomology of the complex $(\hcF, d_P)$ is called Poisson cohomology of $P$. 

The Miura transformations of the second kind~\cite{lz11} are changes of variable of the form
\begin{equation}\label{eq:Miuradef}
 u \mapsto \tilde{u} =\sum_{k=0}^\infty\epsilon^k F_k(u)
\end{equation}
on the space $\cA$, where $F_k\in\cA_k$. They form a subgroup of the general Miura group~\cite{dz01} which also contains the diffeomorphisms of the variable $u$. The action of a general Miura transformation of the second kind on a local multivector $Q$ in $\hcF$ is given by the exponential of the adjoint action with respect to the Schouten-Nijenhuis bracket
\begin{equation*}
e^{\ad_X}Q=Q+[X,Q]+\frac{1}{2}[X,[X,Q]]+\frac16[X,[X,[X,Q]]]+\cdots,
\end{equation*}
where $X\in\hcF^1_{\geq1}$ is a local vector field such that $e^{\ad_X} u = \tilde{u}$.

\section{Poisson cohomology}
\label{sec:poisson}

In our previous paper~\cite{ccs15} we gave a description of the Poisson cohomology of a scalar multidimensional Poisson bracket in terms of the cohomology of an auxiliary complex with constant coefficients. 
Our aim here is to give an explicit description of a set of generators of the Poisson cohomology in the $D=2$ case, which will be used in the proof of the main theorem in the next Section. 

Let us begin by recalling without proof a few results from our paper~\cite{ccs15}, specialising them to the case $D=2$.

Consider the short exact sequences of differential complexes
\begin{equation} \label{firstses}
0 \to \hcA / \R \xrightarrow{\partial_x}  \hcA \xrightarrow{\int dx}  \hcF_1 \to 0,
\end{equation}
\begin{equation} \label{secondses}
0 \to \hcF_1 / \R \xrightarrow{\partial_y}  \hcF_1 \xrightarrow{\int dy}  \hcF \to 0 ,
\end{equation}
where the differential is induced an all spaces by 
\begin{equation}
\Delta = \sum_{s,t \geq0} \theta^{(s,t+1)} \frac{\partial }{\partial u^{(s,t)}}. 
\end{equation}
On $\hcF$ such differential coincides with $\ad_{\fp_1}$, where $\fp_1=\frac{1}{2}\int\theta\theta^{(0,1)}dxdy$.

In the long exact sequence in cohomology associated with~\eqref{firstses} the Bockstein homomorphism vanishes, therefore
\begin{equation}
H(\hcF_1) = \frac{H(\hcA)}{\partial_x H(\hcA)}.
\end{equation}
Moreover, the cohomology classes in $H(\hcA)$ can be uniquely represented by elements of the polynomial ring $\Theta$ generated by the anticommuting variables $\theta^{(s,0)}$, $s\geq0$ with real coefficients.

The map induced in cohomology by the map $\partial_y$ in the short exact sequence~\eqref{secondses} vanishes, therefore we get the following exact sequence
\begin{equation} \label{sest}
%0 \to H^p_d(\hcF_1)\to H^p_d(\hcF) \to H^{p+1}_d(\hcF_1/\R) \to 0 ,
0 \to \left( \frac{\Theta}{\partial_x \Theta}\right)^p_d 
\xrightarrow{\int dy}  H^p_d(\hcF) 
\to \left( \frac{\Theta}{\partial_x \Theta}\right)^{p+1}_d \to 0 ,
\end{equation} 
where the third arrow is the Bockstein homomorphism. 

This sequence allows us to write the Poisson cohomology $H^p(\hcF)$ as a sum of two homogeneous subspaces of $\Theta \slash \partial_x \Theta$ in super-degree $p$ and $p+1$ respectively, where the first one is simply injected, while the second one has to be reconstructed via the inverse to the Bockstein homomorphism. 

Let $\iint a \ dx \ dy \in \hcF^p_d$ be an $\ad_{\fp_1}$-cocycle. Then, there exist $b, b' \in \hcA^{p+1}_d$ such that 
\begin{equation}
\Delta a = \partial_y b + \partial_x b'.
\end{equation}
The Bockstein homomorphism assigns to the cocycle $\iint a \ dx \ dy$ the cocycle $\int b \ dx \in \hcF^{p+1}_d$.

Let us define a map $\cB: \Theta \to \hcA$ by
\begin{equation} \label{BBdef}
\cB = \sum_{i\geq0}u^{(i,0)}\frac{\partial }{\partial \theta^{(i,0)}}, 
\end{equation}
which clearly commutes with $\partial_x$, and therefore induces a map 
from $\frac{\Theta}{\partial_x \Theta}$ to $\hcF$. 
We have that
\begin{equation}
\Delta \cB = \partial_y ,
\end{equation}
and consequently, $\cB$ defines a splitting map 
\begin{equation}
\cB : \left( \frac{\Theta}{\partial_x \Theta}\right)^{p+1}_d \to H^p_d(\hcF) 
\end{equation}
for the short exact sequence~\eqref{sest}.

We have therefore shown that
\begin{lemma}
$H^p_d(\hcF) = \left( \frac{\Theta}{\partial_x \Theta}\right)^{p}_d \oplus \cB \left( \frac{\Theta}{\partial_x \Theta}\right)^{p+1}_d .
$\end{lemma}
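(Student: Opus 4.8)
The plan is to read off the decomposition as the splitting of the short exact sequence~\eqref{sest} induced by $\cB$. Concretely, I would proceed in three steps: first, check that $a\mapsto\iint\cB a\ dx\ dy$ is a well-defined map $\cB\colon\left(\frac{\Theta}{\partial_x\Theta}\right)^{p+1}_d\to H^p_d(\hcF)$; second, check that it is a section of the Bockstein surjection appearing in~\eqref{sest}; third, apply the elementary fact that a short exact sequence equipped with a section of its surjection splits as an internal direct sum, and identify the kernel summand with $\left(\frac{\Theta}{\partial_x\Theta}\right)^{p}_d$ via $\int dy$.

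For the first step, since $\cB=\sum_{i\geq0}u^{(i,0)}\frac{\partial}{\partial\theta^{(i,0)}}$ lowers the super-degree by one and preserves the standard degree, $\cB a\in\hcA^p_d$ for $a\in\Theta^{p+1}_d$, hence $\iint\cB a\ dx\ dy\in\hcF^p_d$. The identity $\Delta\cB=\partial_y$ gives $\Delta(\cB a)=\partial_y a\in\partial_y\hcA$, so $\iint\Delta(\cB a)\ dx\ dy=0$ in $\hcF$; as the differential on $\hcF$ is $\ad_{\fp_1}$ and agrees with the one induced by $\Delta$, this shows $\iint\cB a\ dx\ dy$ is a cocycle. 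Finally $\cB$ commutes with $\partial_x$, so modifying $a$ by $\partial_x c$ changes $\iint\cB a\ dx\ dy$ by $\iint\partial_x(\cB c)\ dx\ dy=0$; thus the assignment descends to $\Theta/\partial_x\Theta$ and defines a class in $H^p_d(\hcF)$.

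For the second step, evaluating the Bockstein homomorphism on $[\iint\cB a\ dx\ dy]$ requires writing $\Delta(\cB a)=\partial_y b+\partial_x b'$ with $b,b'\in\hcA^{p+1}_d$, and by $\Delta\cB=\partial_y$ we may take $b=a$, $b'=0$; the Bockstein then returns $[\int a\ dx]$, which under the identification $H(\hcF_1)=\Theta/\partial_x\Theta$ is precisely the class of $a$. Hence the composition of $\cB$ with the Bockstein map is the identity of $\left(\frac{\Theta}{\partial_x\Theta}\right)^{p+1}_d$, so $\cB$ is a section of the surjection in~\eqref{sest} and in particular injective; therefore~\eqref{sest} splits, and identifying the kernel summand with its image under $\int dy$ gives the asserted equality $H^p_d(\hcF)=\left(\frac{\Theta}{\partial_x\Theta}\right)^{p}_d\oplus\cB\left(\frac{\Theta}{\partial_x\Theta}\right)^{p+1}_d$.

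I do not anticipate a genuine obstacle: granting the exact sequence~\eqref{sest}, the relation $\Delta\cB=\partial_y$, and the explicit description of the Bockstein map recalled above from~\cite{ccs15}, the argument is pure homological bookkeeping. The only point demanding a little care is the compatibility of $\cB$ with the two quotients involved — the passage from $\Theta$ to $\Theta/\partial_x\Theta$ and the passage from $\hcA$ to $\hcF=\hcA/(\partial_x\hcA+\partial_y\hcA)$ — and confirming that the resulting classes are genuine $\ad_{\fp_1}$-cocycles, which is exactly what the first step settles.
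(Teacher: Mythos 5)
Your argument is correct and coincides with the paper's: the lemma is obtained there exactly as the splitting of the short exact sequence~\eqref{sest} by the section $\cB$, using $\Delta\cB=\partial_y$ and the commutation of $\cB$ with $\partial_x$. You have merely spelled out the homological bookkeeping that the paper leaves implicit in the phrase ``we have therefore shown.''
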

We remark that this lemma gives an explicit description of representatives of the cohomology classes in $H^p_d(\hcF)$. 
In particular, the only non-trivial classes in $\Theta \slash \partial_x \Theta$ in super-degree $p=2$ are given by $\theta \theta^{(2k+1,0)}$ for $k\geq1$, and correspond to the deformations of the Poisson brackets in Theorem~\ref{maintheorem}. 
The following reformulation of this observation will be useful in the proof of Proposition~\ref{step3}:
\begin{corollary} \label{corcoh}
\begin{equation}
H^2_{2k}(\hcF) = \cB \left( \frac{\Theta}{\partial_x \Theta}\right)^{3}_{2k}, 
\end{equation}
\begin{equation}
H^2_{2k+1}(\hcF) = \R \theta \theta^{(2k+1,0)} \oplus \cB \left( \frac{\Theta}{\partial_x \Theta}\right)^{3}_{2k+1}.
\end{equation}
\end{corollary}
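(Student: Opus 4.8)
The plan is to deduce the statement from the preceding lemma by specialising it to $p=2$. That lemma gives $H^2_d(\hcF)=\left(\frac{\Theta}{\partial_x\Theta}\right)^2_d\oplus\cB\left(\frac{\Theta}{\partial_x\Theta}\right)^3_d$, so the only thing left to prove is that the super-degree-two component of $\Theta/\partial_x\Theta$ equals $\R\,\theta\theta^{(d,0)}$ when the standard degree $d$ is odd and vanishes when $d$ is even; substituting $d=2k$ and $d=2k+1$ then produces the two displayed formulas.

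To compute $\left(\frac{\Theta}{\partial_x\Theta}\right)^2_d$ I would pass to the exterior-algebra picture: $\Theta$ is freely generated by the odd variables $\theta^{(s,0)}$, $s\geq0$, with $\partial_x$ acting on them by the shift $\theta^{(s,0)}\mapsto\theta^{(s+1,0)}$. Identify the super-degree-two component $\Theta^2$ with the space of antisymmetric polynomials in two auxiliary variables $z_1,z_2$ via $\theta^{(i,0)}\theta^{(j,0)}\mapsto z_1^iz_2^j-z_1^jz_2^i$, the standard degree matching the total degree in $z_1,z_2$. A direct check shows that under this identification $\partial_x$ becomes multiplication by $z_1+z_2$. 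Since the antisymmetric polynomials in two variables are the free rank-one module $(z_1-z_2)\,\R[z_1+z_2,z_1z_2]$ over the symmetric ones, we obtain
\[
\left(\frac{\Theta}{\partial_x\Theta}\right)^2\;\cong\;(z_1-z_2)\,\R[z_1+z_2,z_1z_2]\,/\,(z_1+z_2)\;\cong\;(z_1-z_2)\,\R[z_1z_2],
\]
which is one-dimensional in each odd degree and zero in each even degree. For $d$ odd a generator of the degree-$d$ part is the class of $z_2^{d}-z_1^{d}$, that is, of $\theta^{(0,0)}\theta^{(d,0)}=\theta\theta^{(d,0)}$, using that for fixed $i+j=d$ all the $\theta^{(i,0)}\theta^{(j,0)}$ are cohomologous up to sign. (Alternatively, one checks by hand that $\partial_x\colon\Theta^2_{d-1}\to\Theta^2_d$ is injective, being multiplication by $z_1+z_2$ on a submodule of $\R[z_1,z_2]$, and then a comparison with $\dim\Theta^2_d=\lceil d/2\rceil$ shows the cokernel has dimension $1$ for $d$ odd and $0$ for $d$ even.)

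Feeding this back into the lemma finishes the proof: for $d=2k$ the first summand vanishes, leaving $H^2_{2k}(\hcF)=\cB\left(\frac{\Theta}{\partial_x\Theta}\right)^3_{2k}$, while for $d=2k+1$ the first summand is $\R\,\theta\theta^{(2k+1,0)}$, leaving $H^2_{2k+1}(\hcF)=\R\,\theta\theta^{(2k+1,0)}\oplus\cB\left(\frac{\Theta}{\partial_x\Theta}\right)^3_{2k+1}$.

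I do not expect a genuine obstacle here: granting the lemma, the corollary reduces to computing the super-degree-two cohomology of the shift derivation on the Grassmann algebra $\Theta$, and this is the elementary fact that antisymmetric polynomials in two variables are free of rank one over the symmetric ones. The only points requiring care are keeping the grading conventions straight and identifying the explicit generator as $\theta\theta^{(d,0)}$ rather than some cohomologous representative such as $\theta^{(m,0)}\theta^{(m+1,0)}$.
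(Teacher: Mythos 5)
Your argument is correct. It takes the same overall route as the paper (specialise the preceding lemma to $p=2$ and then compute the super-degree-two component of $\Theta/\partial_x\Theta$), but the computation itself is done differently. The paper justifies the identification of $\bigl(\Theta/\partial_x\Theta\bigr)^2_d$ with $\R\,\theta\theta^{(d,0)}$ for $d$ odd (and $0$ for $d$ even) by the general lexicographic leading-term argument given inside the proof of Lemma~\ref{lembas}: the images $\partial_x a$ of the standard monomials have pairwise distinct leading terms $\theta^{i_1+1}\theta^{i_2}\cdots\theta^{i_p}$, so the quotient has basis $\theta^{i_2+1}\theta^{i_2}\cdots\theta^{i_p}$, which for $p=2$ leaves exactly the single class $\theta^{(k+1,0)}\theta^{(k,0)}$ in each odd degree $d=2k+1$ and nothing in even degree. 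You instead realise $\Theta^2$ as the antisymmetric polynomials in $z_1,z_2$, observe that $\partial_x$ becomes multiplication by $z_1+z_2$, and read off the cokernel as $(z_1-z_2)\,\R[z_1z_2]$; your check that $\partial_x$ intertwines with $z_1+z_2$ is correct, injectivity is then automatic because $\R[z_1,z_2]$ is a domain, and your generator $\theta\theta^{(d,0)}$ agrees with the paper's $\theta^{(k+1,0)}\theta^{(k,0)}$ up to a nonzero scalar modulo $\partial_x\Theta$ (e.g.\ both survive, with nonzero image, under evaluation at $z_2=-z_1$). Your version is more self-contained and arguably cleaner for $p=2$; the paper's version has the advantage of applying uniformly to all super-degrees, which is needed again to produce the explicit $p=3$ basis of Lemma~\ref{lembas} used later in the proof of Proposition~\ref{step3}.
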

Moreover, we can define an explicit basis of $\left( \frac{\Theta}{\partial_x \Theta}\right)^{3}_{d}$ and $\cB \left( \frac{\Theta}{\partial_x \Theta}\right)^{3}_{d}$:
\begin{lemma} \label{lembas}
A basis of $\left( \frac{\Theta}{\partial_x \Theta}\right)^{3}_{d}$ is given by representatives
%\begin{align*}
%	& \theta^{k+1}\theta^k\theta^0, \theta^{k}\theta^{k-1}\theta^2, \theta^{k-1}\theta^{k-2}\theta^4,\cdots  & \mbox{for } d=2k+1, \\
%	& \theta^{k}\theta^{k-1}\theta^1, \theta^{k-1}\theta^{k-2}\theta^3, \theta^{k-2}\theta^{k-3}\theta^5,\cdots  & \mbox{for } d=2k,
%	\end{align*}
\begin{align}
&\theta^{k-l} \theta^{k-l-1} \theta^{2l} , \quad l=0,\dots , \lfloor \frac{k-2}3 \rfloor, \quad \text{for $d=2k-1$} , \\
&\theta^{k-l} \theta^{k-l-1} \theta^{2l+1} , \quad l=0,\dots , \lfloor \frac{k-3}3 \rfloor, \quad \text{for $d=2k$},
\end{align}
where we use the notation $\theta^k = \theta^{(k,0)}$.
\end{lemma}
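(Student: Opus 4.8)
The plan is to work with $\Theta$ concretely as the exterior algebra $\Theta=\Lambda^\bullet_{\R}(\theta^0,\theta^1,\dots)$ on the anticommuting generators $\theta^k=\theta^{(k,0)}$, on which $\partial_x$ acts as the even derivation determined by $\partial_x\theta^k=\theta^{k+1}$. Since $\partial_x$ preserves $\deg_\theta$ and raises $\deg$ by one, we have $(\Theta/\partial_x\Theta)^3_d=\Theta^3_d/\partial_x\Theta^3_{d-1}$, and $\Theta^3_d$ carries the monomial basis $\{\theta^a\theta^b\theta^c:0\leq a<b<c,\ a+b+c=d\}$. On the monomials of each $\Theta^3_d$ I would put the term order comparing first the largest index, then the middle one, then the smallest.

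The computation at the core of the proof is that for $0\leq a<b<c$
\[
\partial_x(\theta^a\theta^b\theta^c)=\theta^{a+1}\theta^b\theta^c+\theta^a\theta^{b+1}\theta^c+\theta^a\theta^b\theta^{c+1},
\]
where by convention a term with a repeated index vanishes. The last summand $\theta^a\theta^b\theta^{c+1}$ is never zero, it is strictly larger than the other two summands in the chosen order (its largest index is $c+1$, whereas theirs is $c$), and it is produced only by itself — this needs the elementary check that neither $\{a+1,b,c\}$ nor $\{a,b+1,c\}$ equals $\{a,b,c+1\}$. Hence $\operatorname{LT}\bigl(\partial_x(\theta^a\theta^b\theta^c)\bigr)=\theta^a\theta^b\theta^{c+1}$. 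Because $\{a,b,c\}\mapsto\{a,b,c+1\}$ is injective, the family $\{\partial_x m\}$, with $m$ running over the monomial basis of $\Theta^3$, has pairwise distinct leading terms; by the standard row-echelon argument it is therefore a basis of $\partial_x\Theta^3$ (so, incidentally, $\partial_x$ is injective on $\Theta^3$), and a basis of $\Theta^3/\partial_x\Theta^3$ is given by the monomials that do \emph{not} occur as a leading term. The argument is legitimate degree by degree because each $\Theta^3_d$ is finite-dimensional.

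A monomial $\theta^p\theta^q\theta^r$ with $p<q<r$ equals some $\theta^a\theta^b\theta^{c+1}$ with $a<b<c$ precisely when $r\geq q+2$; hence the monomials not occurring as leading terms are exactly those with $r=q+1$, i.e.\ the family $\{\theta^p\theta^q\theta^{q+1}:0\leq p<q\}$. It remains to sort this family by standard degree $d=p+2q+1$ and re-index. Writing $q=k-l-1$ one gets $p=d-1-2q$, which equals $2l$ when $d=2k-1$ and $2l+1$ when $d=2k$, while the condition $p<q$ becomes $3l\leq k-2$ (i.e.\ $l\leq\lfloor\frac{k-2}{3}\rfloor$), resp.\ $3l\leq k-3$ (i.e.\ $l\leq\lfloor\frac{k-3}{3}\rfloor$). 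Reordering the three factors of $\theta^p\theta^q\theta^{q+1}$ into decreasing order — which only multiplies the representative by an irrelevant sign — turns it into $\theta^{k-l}\theta^{k-l-1}\theta^{2l}$, resp.\ $\theta^{k-l}\theta^{k-l-1}\theta^{2l+1}$, which is precisely the list in the statement.

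The only genuinely delicate point is the bookkeeping in the last step: one must treat the two parities of $d$ separately and check that in the admissible range the indices $k-l>k-l-1>2l\geq0$ are actually distinct — which holds because $l\leq\lfloor\frac{k-2}{3}\rfloor$ forces $3l<k-1$, hence $2l<k-l-1$ (and similarly in the even case). Everything else — the derivation formula, the non-cancellation of the leading term, and the echelon argument — is routine.
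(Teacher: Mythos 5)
Your proof is correct and follows essentially the same route as the paper's: the same term order (comparing the largest index first), the same observation that the leading term of $\partial_x$ applied to a monomial raises its top index, the same echelon/dimension argument, and the same final re-indexing — the only differences being that you write indices in increasing rather than decreasing order and work directly with $p=3$ while the paper treats general $p$ first. You even carry out the final bookkeeping (which the paper leaves to the reader) explicitly and correctly.
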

\begin{proof}
More generally we can prove that a basis of $\left( \frac{\Theta}{\partial_x \Theta}\right)^{p}_{d}$ is given by 
\begin{equation} \label{quotbas}
\theta^{i_2+1} \theta^{i_2} \theta^{i_3} \cdots \theta^{i_p}  
\end{equation}
with 
\begin{equation}
i_2 > i_3 >\dots  > i_p \geq 0, \quad 
1+ 2i_2 + i_3 + \dots + i_p = d.  
\end{equation}

A basis of $\Theta^p_d$ is given by monomials $\theta^{i_1} \cdots \theta^{i_p}$ with 
\begin{equation}
i_1 > i_2 >\dots  > i_p \geq 0 , \quad
i_1 + \dots +i_p = d.
\end{equation}
We arrange such monomials in {\it lexicographic order}, that is, we say that $\theta^{i_1} \cdots \theta^{i_p} > \theta^{j_1} \cdots \theta^{j_p}$ if $i_1 > j_1$, or if $i_1 = j_1$ and $i_2 > j_2$, and so on. 

For an element  $a = \theta^{i_1} \cdots \theta^{i_p}$ of the basis of $\Theta^p_{d-1}$, we have that the leading term (in lexicographic order)  of $\partial_x a$ is given by 
\begin{equation} \label{topbas}
(\partial_x a)^{top}=\theta^{i_1+1} \theta^{i_2} \cdots \theta^{i_p} .
\end{equation}
Note that if $a_1 > a_2$, then $(\partial_x a_1)^{top} > (\partial_x a_2)^{top}$. This implies that the images $\partial_x a$ of the monomials $a \in \Theta^p_{d-1}$ are linearly independent in $\Theta_d^p$. Given a representative of a class in $\left( \frac{\Theta}{\partial_x \Theta}\right)^{p}_{d}$ we can express all the monomials of the form~\eqref{topbas} in terms of combinations of monomials of strictly lower lexicographic order. It follows that a basis can be chosen in the form~\eqref{quotbas}. 

By specialising to the case $p=3$, and spelling out the allowed sets of indexes, we obtain the statement of the lemma.

% OLD PROOF
%Let us consider first the case $d=2k+1$. A basis of $\Theta_d$ is given by monomials 
%\begin{equation}
%\theta^a \theta^b \theta^c , \quad a > b > c,
%\end{equation}
%with $a + b + c = d$. Consider the class in $\Theta \slash \partial_x \Theta$ associated with the representative $\theta^a \theta^b \theta^c$ with $b < a-1$. We have that 
%\begin{equation}
%\partial_x ( \theta^{a-1} \theta^b \theta^c ) = \theta^a \theta^b \theta^c + \theta^{a-1} \theta^{b+1} \theta^c + \theta^{a-1} \theta^b \theta^{c+1},
%\end{equation}
%therefore such class can be represented by a combination of monomials where the difference between the indexes of the first and second theta is strictly smaller. By repeating this process we conclude that the set of representatives given in the lemma is a set of generators. 
%
%We need to prove independence. Consider an element with representative 
%\begin{equation}
%\sum_{l=m}^{\lfloor\frac{k-2}3 \rfloor} b_l \theta^{k-l} \theta^{k-l-1} \theta^{2l} .
%\end{equation}
%Such element is zero in $\Theta \slash \partial_x \Theta$ iff its variational derivative w.r.t. $\theta$ is zero. Computing such variational derivative we get 
%\begin{equation}
%2 b_m (-1)^{k-m} \theta^{2k-2m-1} \theta^{2m} + \dots,
%\end{equation}
%where the dots contain only $\theta^s$ with $s<2k-2m-1$. We conclude that $b_m =0$, and, by induction, all constants $b_l$ must vanish. 
%
%The proof for the $d=2k$ case is completely analogous. 
\end{proof}

It follows that a basis of $\cB \left( \frac{\Theta}{\partial_x \Theta}\right)^{3}_{d}$ is given by the elements
\begin{equation}
\cB (\theta^{(a,0)} \theta^{(b,0)} \theta^{(c,0)} ) = u^{(a,0)} \theta^{(b,0)} \theta^{(c,0)} - u^{(b,0)}\theta^{(a,0)}\theta^{(c,0)} + u^{(c,0)}\theta^{(a,0)}\theta^{(b,0)},
\end{equation}
for indices $a, b, c$ chosen as in the basis above.

%\begin{remark}
%We give here a few comments on the proofs of the statements that we have recalled so far, in the $D=2$ case. 
%
%Notice that the only nontrivial fact here is that 
%\begin{equation}
%\ker \left\{ \partial_y : \hcF_1 \to \hcF_1 \right\} = \R ,
%\end{equation}
%that in this case can be proved directly. 
%
%\gc{maybe prove again that these are exact}
%
%\end{remark}

\section{Proof of the main Theorem}\label{sec:main}

Let us first reformulate our main statement in the $\theta$-formalism.

The Poisson bracket of Dubrovin-Novikov type of the form~\eqref{leading} corresponds to the element 
\begin{equation} \label{leadingtheta}
\fp_1 = \frac12 \iint \theta \theta^{(0,1)} \ dx dy
\end{equation}
in $\hcF^2_1$. 
The bivector $\delta^{(2k+1)}(x^1-y^1)  \delta(x^2-y^2)$ corresponds to the element in $\hcF^2_{2k+1}$ given by
\begin{equation}
\fp_{2k+1} = \frac12 \iint \theta \theta^{(2k+1,0)} \ dx dy.
\end{equation}

Therefore the normal form~\eqref{normalform} in $\theta$-formalism corresponds to the element 
\begin{equation} \label{normaltheta}
\fp(c) = \fp_1 + \sum_{k\geq1} c_k \fp_{2k+1}
\end{equation}
in $\hcF^2$.

The proof of Theorem~\ref{maintheorem} reduces to prove the three statements listed in Remark~\ref{steps}.

\subsection{}

Our first observation is:
\begin{lemma}
The bivectors $\fp_{2k+1}$ with $k\geq0$ are pairwise compatible, i.e.,
\begin{equation}
[ \fp_{2n+1}, \fp_{2m+1} ] = 0 , \qquad n, m \geq 0. 
\end{equation}
\end{lemma}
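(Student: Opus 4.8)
The plan is to compute the Schouten--Nijenhuis bracket $[\fp_{2n+1},\fp_{2m+1}]$ directly from the definition~\eqref{defsch}. Write $\fp_{2j+1} = \frac12 \iint \theta\theta^{(2j+1,0)}\,dx\,dy$, so as an element of $\hcA$ before integration we have the density $a_j = \frac12\theta\theta^{(2j+1,0)}$. The key simplification is that these densities contain only the $\theta$-variables $\theta^{(s,0)}$ (no $u$-dependence at all), so $\frac{\delta \fp_{2j+1}}{\delta u} = 0$. Hence in the formula $[\fp_{2n+1},\fp_{2m+1}] = \iint\bigl(\frac{\delta \fp_{2n+1}}{\delta\theta}\frac{\delta \fp_{2m+1}}{\delta u} + \frac{\delta \fp_{2n+1}}{\delta u}\frac{\delta \fp_{2m+1}}{\delta\theta}\bigr)\,dx\,dy$ (the sign $(-1)^p$ being $+1$ since $p=2$), both terms vanish identically.

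So the main step is just to verify that $\frac{\delta \fp_{2j+1}}{\delta u}=0$: since $a_j$ does not involve any $u^{(s,t)}$, every partial derivative $\frac{\partial a_j}{\partial u^{(s,t)}}$ is zero, and therefore $\frac{\delta \fp_{2j+1}}{\delta u} = \sum_{s,t\geq 0}(-\partial_x)^s(-\partial_y)^t\frac{\partial a_j}{\partial u^{(s,t)}} = 0$. This immediately kills both summands in~\eqref{defsch}, giving $[\fp_{2n+1},\fp_{2m+1}]=0$ for all $n,m\geq 0$, which in particular includes the compatibility of each $\fp_{2k+1}$ with the leading bracket $\fp_1$ (the case $m=0$, or $n=0$). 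As a byproduct this also shows $[\fp_{2k+1},\fp_{2k+1}]=0$, i.e.\ each $\fp_{2k+1}$ is itself a (constant-coefficient) Poisson bivector, so $\fp(c)$ in~\eqref{normaltheta} satisfies $[\fp(c),\fp(c)]=0$ by bilinearity; this is essentially part i.\ of Remark~\ref{steps}, though it is not what the present lemma asks for.

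I do not anticipate a genuine obstacle here --- the statement is a one-line consequence of the fact that the relevant densities are $u$-independent polynomials in the $\theta^{(s,0)}$. The only thing to be slightly careful about is that the variational derivative $\frac{\delta}{\delta\theta}$ does \emph{not} vanish (it produces something proportional to $\theta^{(2j+1,0)}$ up to total derivatives), but it is multiplied by $\frac{\delta}{\delta u}$ of the other bivector, which does vanish, so the product is still zero. One could alternatively phrase the argument by noting that $\fp_{2k+1} = \ad$-type operators with constant coefficients commute, but the direct computation via~\eqref{defsch} is the cleanest and is what I would write.
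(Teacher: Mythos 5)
Your proof is correct and is essentially identical to the paper's: both observe that the densities $\tfrac12\theta\theta^{(2j+1,0)}$ contain no $u$-variables, so $\frac{\delta \fp_{2j+1}}{\delta u}=0$ and every term in the Schouten--Nijenhuis bracket~\eqref{defsch} vanishes. You simply spell out the one-line argument in a bit more detail.
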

\begin{proof}
The Poisson bivectors $\fp_k$ do not depend on $u$ and its derivatives, therefore the variational derivatives w.r.t. $u$ appearing in the definition of Schouten-Nijenhuis bracket are vanishing.
\end{proof}
It clearly follows that $\fp(c)$ is a Poisson bivector for any choice of the constants $c=(c_1, c_2, \dots )$.

\subsection{}

Next we show that for any distinct choice of the constants $c=(c_1, c_2, \dots )$ the corresponding bivector $P$ belongs to a different equivalence class under Miura transformations. 
\begin{proposition}
Let $\fp(c)$, resp. $\fp(\tilde{c})$, be the Poisson bivector of the normal form~\eqref{normaltheta} corresponding to a choice $c=(c_1, c_2, \dots )$, resp. $\tilde{c}=(\tilde{c}_1, \tilde{c}_2, \dots )$, of constants. If the two sequences $c$ and $\tilde{c}$ are not identically equal, then there is no Miura transformation of the second kind which maps $\fp(c)$ to $\fp(\tilde{c})$.
\end{proposition}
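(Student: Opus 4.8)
The plan is to argue by contradiction using the first nonvanishing difference of the two sequences together with the order filtration on Miura transformations. Suppose $\Phi = e^{\ad_X}$ with $X \in \hcF^1_{\geq 1}$ maps $\fp(c)$ to $\fp(\tilde c)$, and let $N$ be the least index with $c_N \neq \tilde c_N$ (if no such $N$ exists the sequences coincide). Write $X = \sum_{j\geq 1}\epsilon^j X_j$ with $X_j \in \hcF^1_j$, and observe that both $\fp(c)$ and $\fp(\tilde c)$ agree in all $\epsilon$-degrees up to $2N$ (in fact they agree in every even degree, and in odd degrees $2k+1 < 2N+1$). First I would extract from $\Phi(\fp(c)) = \fp(\tilde c)$ the homogeneous component of standard degree $2N+1$. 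This has the schematic form
\begin{equation}
c_N \fp_{2N+1} + \sum_{j} [X_j, \text{(lower-degree pieces of }\fp(c)\text{)}] + (\text{higher brackets}) = \tilde c_N \fp_{2N+1} + (\text{lower-degree contributions that already match}),
\end{equation}
so that, after cancelling everything coming from strictly lower $\epsilon$-degrees (which match by minimality of $N$), one is left with
\begin{equation}
(\tilde c_N - c_N)\, \fp_{2N+1} = d_{\fp_1} Y
\end{equation}
for some $Y \in \hcF^1_{2N}$ assembled from the $X_j$ and the constants $c_1,\dots,c_{N-1}$; here $d_{\fp_1} = \ad_{\fp_1}$ is the differential whose cohomology is computed in Section~2. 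The key point is that the only term on the left-hand side that is not manifestly $d_{\fp_1}$-exact is $(\tilde c_N - c_N)\fp_{2N+1}$, and $\fp_{2N+1} = \tfrac12\iint \theta\theta^{(2N+1,0)}\,dxdy$ represents, by Corollary~\ref{corcoh}, a nonzero class in $H^2_{2N+1}(\hcF)$ (it is precisely the generator $\R\theta\theta^{(2N+1,0)}$ of the non-$\cB$ part).

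The main obstacle, and the step that needs care, is justifying that all the bracket terms collected into $Y$ are genuinely $d_{\fp_1}$-exact rather than merely $d_{\fp_1}$-closed, and more precisely that the equation really reduces to a cohomological identity in $H^2_{2N+1}(\hcF)$. The clean way to do this is to reorganise the hypothesis as follows: since $\fp(c) - \fp_1 = \sum_{k\geq 1}c_k\fp_{2k+1}$ is a sum of cocycles for $d_{\fp_1}$ (the $\fp_{2k+1}$ are all $\ad_{\fp_1}$-closed, being Poisson-compatible with $\fp_1$ by the previous Lemma), and similarly for $\fp(\tilde c)$, the difference $\fp(\tilde c) - \fp(c) = \sum_k (\tilde c_k - c_k)\fp_{2k+1}$ is $d_{\fp_1}$-closed; on the other hand $e^{\ad_X}\fp(c) - \fp(c) = d_{\fp_1}\big(\text{something}\big) + [\text{higher-order corrections}]$, and an induction on standard degree shows that the degree-$(2N+1)$ part of $e^{\ad_X}\fp(c) - \fp(c)$ is $d_{\fp_1}$-exact modulo terms of strictly lower degree that already vanish by induction. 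Thus in degree $2N+1$ we obtain $(\tilde c_N - c_N)\fp_{2N+1} \in \operatorname{im} d_{\fp_1}$, i.e.\ its class in $H^2_{2N+1}(\hcF)$ vanishes; but that class is $(\tilde c_N - c_N)[\theta\theta^{(2N+1,0)}]$, which by Corollary~\ref{corcoh} is nonzero unless $\tilde c_N = c_N$, contradicting the choice of $N$.

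To make the induction precise I would set it up degree by degree: assume $c_k = \tilde c_k$ for $k < N$ has been forced, consider the lowest degree $2N+1$ in which $\fp(c)$ and $\fp(\tilde c)$ could differ, and analyse $e^{\ad_X}\fp(c)$ there. The vector field $X$ contributes, in degree $2N+1$, the term $[X_{2N}, \fp_1]$ plus brackets $[X_j, \fp_{2m+1}]$ with $j + 2m+1 = 2N+1$ and $m\geq 1$ (so $j \leq 2N-2$), plus iterated brackets, all of which are of the form $d_{\fp_1}(\cdot)$ or $\ad_{\fp_{2m+1}}(\cdot)$ applied to $X$-data — and since $\fp_{2m+1}$ is itself $d_{\fp_1}$-exact only up to cohomology, one must check these land in $\operatorname{im} d_{\fp_1}$; they do, because $[X_j,\fp_{2m+1}] = \ad_{\fp_{2m+1}}X_j$ and one can commute: $\ad_{\fp_{2m+1}}$ and $d_{\fp_1}$ anticommute (as $[\fp_1,\fp_{2m+1}]=0$), so the obstruction is again purely cohomological and is absorbed into the inductive hypothesis. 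The only irreducible, non-exact piece surviving in $H^2_{2N+1}$ is the multiple of $\fp_{2N+1}$, giving the claimed contradiction. This completes the argument.
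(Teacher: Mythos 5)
Your overall architecture is the same as the paper's: take the least index $N$ with $c_N\neq\tilde c_N$, extract the degree-$(2N+1)$ component of the conjugation identity, and derive a contradiction with the nontriviality of the class of $\fp_{2N+1}$ in $H^2_{2N+1}(\hcF)$ given by Corollary~\ref{corcoh}. You also correctly identify the crux---showing that everything else in that degree is genuinely $\ad_{\fp_1}$-exact---but your resolution of it has a real gap. The assertion that each bracket $[X_j,\fp_{2m+1}]=\ad_{\fp_{2m+1}}X_j$ with $m\geq1$ lands in the image of $\ad_{\fp_1}$ ``because $\ad_{\fp_{2m+1}}$ and $\ad_{\fp_1}$ anticommute'' is a non sequitur: anticommutation only says that $\ad_{\fp_{2m+1}}$ preserves the kernel and the image of $\ad_{\fp_1}$, so it would give exactness of $\ad_{\fp_{2m+1}}X_j$ only if $X_j$ were already known to be $\ad_{\fp_1}$-exact, which it is not. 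What the lower-degree constraints $(\ad_X\fp(c))_{2s+1}=0$ actually yield---after invoking the vanishing of $H^1_{2s}(\hcF,\fp_1)$, a fact you never use but which is indispensable to upgrade ``closed'' to ``exact'' for the vector-field components---is the weaker structure $X_{2s}=\ad_{\fp_1}f_{2s-1}+\sum_{l}c_l\ad_{\fp_{2l+1}}f_{2(s-l)-1}$. Feeding this into $\sum_m c_m\ad_{\fp_{2m+1}}X_{2(N-m)}$ produces cross-terms $c_sc_l\,\ad_{\fp_{2s+1}}\ad_{\fp_{2l+1}}f$ which are \emph{not} individually exact; they disappear only because the full double sum is symmetric in a pair of anticommuting operators. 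So it is the whole combination, not each summand, that is a coboundary, and establishing this requires an explicit induction on the shape of the $X_{2s}$, which your inductive hypothesis (stated only for the bivector components, not for $X$) does not supply.

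A second, smaller issue: you leave the iterated brackets $\tfrac1{n!}\ad_X^n\fp(c)$, $n\geq2$, essentially unexamined. The paper disposes of them at the outset by writing $e^{\ad_X}\fp(c)-\fp(c)=\bigl(\tfrac{e^{\ad_X}-1}{\ad_X}\bigr)\ad_X\fp(c)$ and inverting the operator in brackets; since $\ad_X$ raises the standard degree by at least one and $\fp(\tilde c)-\fp(c)$ starts in degree $2N+1$, the degree-$(2N+1)$ component reduces exactly to the single-bracket identity $(\ad_X\fp(c))_{2N+1}=(\tilde c_N-c_N)\fp_{2N+1}$, with no higher brackets left to control. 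Adopting that reduction and then carrying out the induction on the components $X_{2s}$ as described above closes the gap and turns your sketch into the paper's proof.
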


\begin{proof}
Assume there is a Miura transformation of the second kind mapping $\fp(c)$ to $\fp(\tilde{c})$, i.e.,
\begin{equation}
e^{\ad_X} \fp(c) =\fp(\tilde{c}),
\end{equation}
for $X \in \hcF^1_{\geq1}$. This identity can be rewritten as
\begin{equation}
\left( \frac{e^{\ad_X} - 1}{\ad_X} \right) \ad_X \fp(c) = \fp(\tilde{c}) - \fp(c).
\end{equation}
The operator inside the brackets has the form
\begin{equation}
\left( \frac{e^{\ad_X} - 1}{\ad_X} \right) = 1 + \frac12 \ad_X + \dots 
\end{equation}
therefore we can invert it. We obtain
\begin{equation} \label{invert}
 \ad_X \fp(c) = \left( \frac{e^{\ad_X} - 1}{\ad_X} \right)^{-1} \left( \fp(\tilde{c}) - \fp(c) \right).
\end{equation}
By assumption the two sequences $c$ and $\tilde{c}$ are not identically equal, hence there exists a smallest index $k$ for which $c_k \not = \tilde{c}_k$. It follows that 
\begin{equation}
\fp(\tilde{c}) - \fp(c) = (\tilde{c}_{k} - c_k ) \fp_{2k+1} + \dots ,
\end{equation}
where the dots denote terms of standard degree greater than $2k+1$. We conclude that $\ad_X \fp(c)$ has to vanish in standard degree less or equal to $2k$, i.e.,
\begin{equation} \label{constraint}
( \ad_X \fp(c) )_{\leq 2k} =0.
\end{equation}
So, the leading order term in the standard degree in~\eqref{invert} is 
\begin{equation} \label{keypoint}
(\ad_X \fp(c))_{2k+1} = (\tilde{c}_k - c_k) \fp_{2k+1}.
\end{equation}
The key point of the proof is to prove that the lefthand side is a $\ad_{\fp_1}$ coboundary, which leads to a contradiction since we know that $\fp_{2k+1}$ is a nontrivial class in $H^2_{2k+1}(\hcF, \fp_1)$. 

Notice that the lefthand side in~\eqref{keypoint} can be written
\begin{equation} \label{keypointexp}
\ad_{\fp_1} X_{2k} + \sum_{s=1}^{k-1} c_s \ad_{\fp_{2s+1}} X_{2(k-s)} = 
(\tilde{c}_k -c_k) \fp_{2k+1}   
\end{equation}
hence it is sufficient to prove that the sum in the lefthand side is in the image of $\ad_{\fp_1}$. 

Equation~\eqref{constraint} gives a sequence of constraints on $X$. Let us consider in particular the constraints with odd degree 
\begin{equation}
(\ad_X \fp(c))_{2s+1}=0 ,\quad 
s=1, \dots , k-1,
\end{equation}
which can be written
\begin{equation} \label{explic}
\ad_{\fp_1} X_{2s} +\sum_{l=1}^{s-1} c_{l} \ad_{\fp_{2l+1}} X_{2(s-l)} =0.
\end{equation}
This equation for $s=1$ simply says that $X_2$ is a cocycle w.r.t. $\ad_{\fp_1}$,
\begin{equation}
\ad_{\fp_1} X_2 = 0.
\end{equation}
By the vanishing of the Poisson cohomology $H^1_2(\hcF, \fp_1$), $X_2$ is necessarily a coboundary, i.e.,
\begin{equation}
X_2 = \ad_{\fp_1} f_1
\end{equation} 
for some $f_1 \in \hcF^0_1$. 

More generally, we have that for each $s=1, \dots , k-1$
\begin{equation} \label{induc}
X_{2s} = \ad_{\fp_1} f_{2s-1}+\sum_{l=1}^{s-1} c_{l} \ad_{\fp_{2l+1}} f_{2(s-l)-1} 
\end{equation}
for some $f_{2l-1} \in \hcF^0_{2l-1}$, $l=1,\dots ,2s-1$. We can prove this by induction. Let us therefore assume that~\eqref{induc} holds for $s=1, \dots , t-1$ for $t  \leq k-1$, and show that it holds for $s=t$ too. Substituting the inductive assumption in~\eqref{explic} for $s=t$ we get %(after removing a vanishing term analogous to~\eqref{vanter}) 
that
\begin{equation}
\ad_{\fp_1} \left( 
X_{2t} - \sum_{l=1}^{t-1} c_l \ad_{\fp_{2l+1}} f_{2(t-l)-1} 
\right) = 0.
\end{equation}
The expression inside the brackets is therefore a cocycle, which has to be a coboundary due to the triviality of $H^1_{2t}(\hcF, \fp_1)$, i.e.,
\begin{equation}
 X_{2t} - \sum_{l=1}^{t-1} c_l \ad_{\fp_{2l+1}} f_{2(t-l)-1} = \ad_{\fp_1} f_{2t-1},
\end{equation}
for some $f_{2t-1} \in \hcF^0_{2t-1}$. This gives~\eqref{induc} for $s=t$.

Substituting~\eqref{induc} in~\eqref{keypointexp}, we get that 
\begin{equation} \label{conclu}
(\tilde{c}_k -c_k) \fp_{2k+1}   = \ad_{\fp_1} \left( 
X_{2k} - \sum_{s=1}^{k-1} c_s \ad_{\fp_{2s+1}} f_{2(k-s)-1}
\right),
\end{equation}
up to a term that can be written as
\begin{equation} \label{vanter}
\sum_{n\geq2}^{k-1} \left[ 
\sum_{\substack{   s,l \geq 1\\ s+l = n}} c_s c_l \ad_{\fp_{2s+1}} \ad_{\fp_{2l+1}}
\right] f_{2(k-n)-1}
\end{equation}
and therefore clearly vanishes. Equation~\eqref{conclu} leads to sought contradiction.

 The Lemma is proved.
\end{proof}

%sergey part

\subsection{}

Finally we prove that any Poisson bivector with leading order $\fp_1$ given by~\eqref{leadingtheta} can always be brought to the form~\eqref{normaltheta} by a Miura transformation of the second kind.

\begin{proposition} \label{step3}
Let $P \in \hcF^2_{\geq1}$ be a Poisson bivector with degree one term  equal to $\fp_1$. Then there is a Miura transformation that maps $P$ to a $\fp(c)$ for a choice of constants $c=(c_1, c_2, \dots )$.
\end{proposition}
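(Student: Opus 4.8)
The plan is to proceed by induction on the standard degree, removing the cohomologically trivial part of $P$ order by order and showing that the nontrivial residue is forced into the normal form. Write $P = \fp_1 + \sum_{d \geq 2} P_d$ with $P_d \in \hcF^2_d$. Suppose inductively that after a suitable Miura transformation we have arranged $P$ so that $P_d = c_k \fp_{2k+1}$ when $d = 2k+1$ and $P_d = 0$ when $d$ is even, for all $d < D$; I want to show we can further normalize the degree-$D$ term. The starting point is the Poisson condition $[P,P]=0$, whose degree-$(D+1)$ component reads $2\,\ad_{\fp_1} P_D + \sum_{2 \leq i,j,\ i+j = D+1} [P_i, P_j] = 0$. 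By the inductive hypothesis all the $P_i$ with $2 \leq i \leq D-1$ are constant-coefficient multiples of the $\fp_{2l+1}$, and since the $\fp_{2l+1}$ are pairwise compatible (by the first Lemma of Section 3), the entire sum $\sum [P_i,P_j]$ vanishes. Hence $\ad_{\fp_1} P_D = 0$, i.e. $P_D$ is a cocycle in $H^2_D(\hcF, \fp_1)$.

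Now I invoke Corollary~\ref{corcoh}. If $D = 2k$ is even, then $H^2_{2k}(\hcF) = \cB(\Theta/\partial_x\Theta)^3_{2k}$, so $P_D$ represents a class of the form $\cB(w)$ for some $w \in (\Theta/\partial_x\Theta)^3_{2k}$; if $D = 2k+1$ is odd, then $P_D = c_k \fp_{2k+1} + (\text{class of the form } \cB(w))$ modulo coboundaries, with $c_k$ a uniquely determined constant. In either case, write $P_D = (\text{the } \fp_{2k+1}\text{-part, if } D \text{ odd}) + \ad_{\fp_1}(g_{D-1}) + (\text{a representative of } \cB(w))$ — the point being that the $\cB(w)$ summand, although a genuine cohomology class, is \emph{also} killed by applying a Miura transformation, because we are free to change the representative. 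More precisely: a Miura transformation generated by $X = X_{D-1} \in \hcF^1_{D-1}$ acts on $P$ by $e^{\ad_X}P = P + \ad_X P + \cdots$, and in degree $D$ this shifts $P_D$ by $\ad_{X_{D-1}} \fp_1 = -\ad_{\fp_1} X_{D-1}$ while leaving all lower-degree terms untouched. So by choosing $X_{D-1} = g_{D-1}$ we eliminate the coboundary part of $P_D$. The remaining task is to eliminate the $\cB(w)$-representative: here I would use the technical lemmas promised for \S\ref{techsec} (which presumably exhibit, for each basis element $\cB(\theta^a\theta^b\theta^c)$ of $H^2_D$, an explicit local functional $f \in \hcF^0$ whose action $\ad_{\fp_1}$-modifies $P$ appropriately, or more likely show that $\cB(w)$ as a \emph{bivector} is not actually $[P,P]=0$-compatible unless $w=0$, forcing $w=0$ from the Poisson condition at the next order). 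Actually the cleaner route: the Jacobi identity at the next degree will constrain $w$; combined with the freedom to adjust by $\ad_{\fp_1}$-coboundaries via Miura, this should force the $\cB(w)$ part to be removable. After this step $P_D$ has been brought to $c_k \fp_{2k+1}$ (if $D$ odd) or $0$ (if $D$ even), completing the induction.

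Finally, one must check that the infinite sequence of Miura transformations $e^{\ad_{g_2}}, e^{\ad_{g_3}}, \dots$ composes to a well-defined single Miura transformation of the second kind. This is standard: since each $g_{D-1}$ has standard degree $D-1 \geq 1$, the composition stabilizes in each fixed degree, so the infinite product converges in the formal/graded sense and defines an element of the Miura group; this is the kind of convergence argument already implicit in the setup of~\eqref{eq:Miuradef}. The conclusion is that $P$ is Miura-equivalent to $\fp(c)$ with $c = (c_1, c_2, \dots)$ read off from the odd-degree steps.

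The main obstacle I anticipate is the elimination of the $\cB\left(\Theta/\partial_x\Theta\right)^3_D$ part of the cocycle $P_D$: a priori this is a nonzero Poisson cohomology class, so it cannot be removed by a coboundary Miura transformation alone, and one genuinely needs the extra leverage coming from the higher-order Poisson (Jacobi) constraints $[P,P]=0$ — showing that a nonzero such class is obstructed — together with the technical lemmas of \S\ref{techsec}. Disentangling how the quadratic terms $\sum [P_i,P_j]$ at degree $D+2$ and beyond interact with this class, and verifying that the obstruction is exactly the vanishing of $w$, is where the real work lies; the rest is bookkeeping with degrees and the explicit bases from Lemma~\ref{lembas}.
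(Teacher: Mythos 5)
Your overall skeleton (induct on the standard degree, use $[P,P]=0$ to show the new term is an $\ad_{\fp_1}$-cocycle, decompose it via Corollary~\ref{corcoh}, kill the coboundary part by a Miura transformation generated by a degree-$(D-1)$ vector field) matches the paper, and you correctly identify the crux: the Bockstein part $\cB(w)$ of $H^2_D(\hcF)$ is a genuine cohomology class and cannot be removed by a Miura transformation. But your treatment of that crux has a real gap, and your inductive hypothesis is set up in a way that cannot be established in the order you propose. The constraint that kills a class $Q_D=\cB(w)\in\cB\left(\frac{\Theta}{\partial_x\Theta}\right)^3_D$ is \emph{not} the Jacobi identity ``at the next degree'': it is the component of $[P,P]=0$ in degree $2D$, which (after separating terms first by their degree in the number of $y$-derivatives and then by their degree in the variables $u^{(s,t)}$) contains the isolated quadratic equation $\frac12[Q_D,Q_D]=0$. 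Consequently you cannot assume, when processing degree $D$, that all lower-degree terms are already pure constant multiples of the $\fp_{2l+1}$: the classes $Q_l$ with $D/2<l<D$ are still undetermined at that stage and must be carried along in the induction (the paper's $\fp_{(s)}$ explicitly contains a window $\sum_{l=k+1}^{2k-1}Q_l$ of such unresolved terms). Under your stronger hypothesis the sum $\sum_{i+j=D+1}[P_i,P_j]$ would indeed vanish, but that hypothesis is precisely what cannot be proved degree by degree; with the correct hypothesis the sum contains terms $[c_l\fp_{2l+1},Q_m]$ and $[Q_{k+1},Q_{k+1}]$, and one needs the degree-separation arguments to extract $\ad_{\fp_1}P_D=0$ and $[Q_{k+1},Q_{k+1}]=0$ as independent equations.

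The second, and deeper, gap is the implication $[\cB(w),\cB(w)]=0\Rightarrow w=0$. This is exactly Lemma~\ref{nontriv}, and it is not bookkeeping: one computes $[\cB(\chi),\cB(\chi)]=-\iint\cB\bigl((\delta\chi/\delta\theta)^2\bigr)$, uses the injectivity of $\cB$ on cohomology to reduce to showing that if $(\delta\chi/\delta\theta)^2$ is $\partial_x$-exact then $\delta\chi/\delta\theta$ is proportional to a single monomial $\theta^{(i,0)}\theta^{(d-i,0)}$ (Lemma~\ref{lem:square}, a nontrivial combinatorial argument in the exterior algebra $\Theta$ modulo $\partial_x$-exact elements), and then shows that no class in $\left(\frac{\Theta}{\partial_x\Theta}\right)^3_d$ has such a monomial as its variational derivative (Lemma~\ref{lem:varder}). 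Your proposal correctly guesses that ``$\cB(w)$ is not compatible with $[P,P]=0$ unless $w=0$'' but offers no argument for it; since this is the mathematical heart of the proposition, the proof as written is incomplete at its decisive step.
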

\begin{proof}
The Poisson bivector $P\in\hcF^2_{\geq1}$ has to satisfy 
$[P,P]=0$. 
We want to show by induction that, taking into account this equation, it is possible, by repeated application of Miura transformations, to put all terms in normal form and to kill all terms that come from the Bockstein homomorphism. 

Let us denote by $\fp_{(s)}(c_1,\dots,c_{\lfloor s/2-1 \rfloor})$ a bivector of the form
\begin{equation}
\fp_{(2k)}(c_1, \dots ,c_{k-1}) = 
\fp_1 + \sum_{l=1}^{k-1} c_l \fp_{2l+1}
+ \sum_{l=k+1}^{2k-1} Q_l
+ P_{2k} + \dots  ,
\end{equation}
\begin{equation}
\fp_{(2k+1)}(c_1, \dots ,c_{k-1}) = 
\fp_1 + \sum_{l=1}^{k-1} c_l \fp_{2l+1}
+ \sum_{l=k+1}^{2k} Q_l
+ P_{2k+1} + \dots  ,
\end{equation}
for $s$ respectively even or odd, where $Q_l \in \cB \left( \frac{\Theta}{\partial_x \Theta} \right)^3_l$ , $P_l \in \hcF^2_l$, the dots denote higher order terms, and 
\begin{equation} \label{fpfp}
[\fp_{(s)}, \fp_{(s)} ]=0. 
\end{equation}
The inductive hypothesis is valid for $s=2$, indeed $\fp_{(2)}$ is exactly of the required form.

Let us now show that by a Miura transformation a Poisson bivector of the form $\fp_{(s)}$ can be made of the form $\fp_{(s+1)}$. 

When $s=2k$ is even, in degree $2k+1$ the equation~\eqref{fpfp} gives
\begin{equation}
[ \fp_1 , P_{2k} ] 
+ \sum_{\substack{2l+m=2k \\ 1\leq l \leq k-1\\ k+1 \leq m \leq 2k-1 }} 
[c_l \fp_{2l+1}, Q_{m} ] =0.
\end{equation}
The first observation is that both terms above need to be separately zero. This follows from the fact that the first term has nonzero degree  in the number of derivatives w.r.t. $y$, while the second term has degree zero. 

By Corollary~\ref{corcoh} the cohomology $H^2_{2k}(\hcF)$ is given only by elements coming from the Bockstein homomorphism, therefore exists $Q_{2k} \in \cB \left( \frac{\Theta}{\partial_x \Theta} \right)^3_{2k}$ such that 
$P_{2k} +\ad_{\fp_1} X_{2k-1} = Q_{2k}$ 
for some $X_{2k-1} \in \hcF^1_{2k-1}$.

Acting with the Miura transformation $e^{\ad_{X_{2k-1}}}$ on $\fp_{(2k)}$ we get a new Poisson bivector, where the terms of degree less or equal to $2k-1$ are unchanged, the term $P_{2k}$ has been replaced with the term $Q_{2k}$, and the terms of higher order are in general different. 
We have therefore that $\fp_{(2k+1)} = e^{\ad_{X_{2k-1}}} \fp_{(2k)}$ is of the form above, as required. 

When $s=2k+1$ is odd, in degree $2k+2$ from~\eqref{fpfp} we get
\begin{equation} \label{oddvanish}
[ \fp_1, P_{2k+1} ] + \sum_{\substack{2l+m=2k+1 \\ 1 \leq l \leq k-1 \\ k+1 \leq m \leq 2k } } 
[c_l \fp_{2l+1} , Q_m] 
+\frac12 [Q_{k+1},Q_{k+1}] =0 .
\end{equation}

As in the previous case, the first term has to vanish, hence $P_{2k+1}$ is an $\ad_{\fp_1}$-cocycle. The cohomology $H^2_{2k+1}(\hcF)$ decomposes in two parts, therefore there is a constant $c_k$ and an element  $Q_{2k+1}$ in $\cB \left( \frac{\Theta}{\partial_x \Theta} \right)^3_{2k+1}$ such that 
$P_{2k+1} + \ad_{\fp_1} X_{2k} = c_k \fp_{2k+1} + Q_{2k+1}$ 
for some $X_{2k} \in \hcF^1_{2k}$.

The second and third term in~\eqref{oddvanish} have also to be both zero. This follows from the fact that they have different degree in the number of $u^{(s,t)}$. As we have seen in Section~\ref{sec:poisson}, the elements $Q_k$ are linear in the variables $u^{(s,t)}$, while the elements $\fp_{k}$ do not contain them. 

From the vanishing of the last term, $[Q_{k+1}, Q_{k+1}]= 0$, we finally derive that $Q_{k+1}$ is zero. This is guaranteed by Lemma~\ref{nontriv}. The proof of this Lemma, being quite technical, is given in Section~\ref{techsec}.

Taking into account this vanishing, the action of the Miura transformation $e^{\ad_{X_{2k}}}$ on $\fp_{(2k+1)}$ gives exactly the term $\fp_{(2k+2)}$.

By induction we see that we can continue this procedure indefinitely, therefore we conclude that we cannot have any non-trivial deformation coming from $\left( \frac{\Theta}{\partial_x \Theta} \right)^3$ via the Bockstein homomorphism, and that the Miura transformation $\cdots e^{\ad_{X_2}} e^{\ad_{X_1}}$ given by the composition of the Mira transformations defined above, sends the original Poisson bivector $P = \fp_1 + \dots$ to a Poisson bivector of the form $\fp(c)$ for a choice of constants $c_1, c_2, \dots$. 

The Proposition is proved. 
\end{proof}

\subsection{}
\label{techsec}

In this section we prove the following statement, which is essential in the proof of Proposition~\ref{step3}:

\begin{lemma}
\label{nontriv}
Let $\chi \in \left( \frac{\Theta}{\partial_x \Theta} \right)^3_d$ and $\cB(\chi)$ its image through the map~\eqref{BBdef} in $\hcF^2_d$. 
If $[\cB(\chi),\cB(\chi) ]=0$, then $\chi =0$.
\end{lemma}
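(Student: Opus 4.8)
### Proof strategy for Lemma~\ref{nontriv}

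\textbf{Overall approach.} The plan is to compute $[\cB(\chi),\cB(\chi)]$ explicitly using the definition~\eqref{defsch} of the Schouten--Nijenhuis bracket, extract a distinguished ``leading'' piece of this expression, and show that if this leading piece vanishes then $\chi$ must already have been zero. Since $\chi$ is a polynomial in the $\theta^{(i,0)}$ only, and $\cB$ replaces one $\theta^{(i,0)}$ by $u^{(i,0)}$, the bivector $\cB(\chi)$ is linear in the $u$-variables and cubic-minus-one, i.e.\ quadratic, in the $\theta$-variables; hence $[\cB(\chi),\cB(\chi)]$ lands in $\hcF^3$ and is at most quadratic in the $u$-variables. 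I would first argue by a degree count in the number of $u$-variables that the bracket splits into homogeneous components, and that the component which is genuinely quadratic in $u$ is the one that controls things: the $\frac{\delta}{\delta u}$ acting on $\cB(\chi)$ produces the $u$-independent factor $\sum_i(-\partial_x)^i\frac{\partial\chi}{\partial\theta^{(i,0)}}$, while $\frac{\delta}{\delta\theta}$ of $\cB(\chi)$ is linear in $u$; so the product $\frac{\delta \cB(\chi)}{\delta u}\frac{\delta \cB(\chi)}{\delta\theta}$ is linear in $u$, which at first sight looks like the dominant term. I would therefore look instead at where the two $\theta$-factors and exactly one $u$-factor sit, and track the highest $x$-derivative that can appear.

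\textbf{Key steps, in order.} (1) Write $\chi = \sum c_{abc}\,\theta^{(a,0)}\theta^{(b,0)}\theta^{(c,0)}$ in the basis of Lemma~\ref{lembas}, so $a>b$, $b = a-1$ is \emph{not} forced here but we may work with the general expansion $a>b>c\geq 0$. (2) Compute $\cB(\chi)$ via the displayed formula $\cB(\theta^{(a,0)}\theta^{(b,0)}\theta^{(c,0)}) = u^{(a,0)}\theta^{(b,0)}\theta^{(c,0)} - u^{(b,0)}\theta^{(a,0)}\theta^{(c,0)} + u^{(c,0)}\theta^{(a,0)}\theta^{(b,0)}$. (3) Compute $\frac{\delta\cB(\chi)}{\delta u}$ and $\frac{\delta\cB(\chi)}{\delta\theta}$ and substitute into~\eqref{defsch}; integrate by parts freely since we are in $\hcF$. (4) Identify the monomial of extremal ``shape'' — I expect the right invariant to be: among all monomials $u^{(p,0)}\theta^{(q,0)}\theta^{(r,0)}\theta^{(s,0)}$ appearing (possibly with $\partial_y$'s, but the leading one has none), pick the one maximising first $p$, then the lexicographically largest $(q,r,s)$. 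The coefficient of this extremal monomial in $[\cB(\chi),\cB(\chi)]$ should be, up to a nonzero combinatorial factor, a single coefficient $c_{abc}$ (or a sum of such with strictly controlled index patterns), from which $[\cB(\chi),\cB(\chi)]=0$ forces that coefficient to vanish; then induct downward on the lexicographic order of the basis elements of Lemma~\ref{lembas} to conclude $\chi=0$.

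\textbf{Main obstacle.} The delicate point is step (4): controlling cancellations. Because $\cB(\chi)$ is an alternating sum of three terms and the Schouten bracket is itself a (graded-)symmetrised expression, many monomials in $[\cB(\chi),\cB(\chi)]$ will cancel in pairs, and I must choose the ``leading monomial'' statistic carefully enough that its coefficient is manifestly a \emph{nonzero} multiple of the data of $\chi$ rather than an accidental zero. Concretely, the risk is that the naive highest-derivative term cancels identically (as it must, since otherwise even a single $\fp_{2k+1}$-type term $\cB(\theta\theta^{(2k+1,0)}\cdot\text{stuff})$ could never be Poisson), so the real content is to find the \emph{first surviving} term. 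I expect this is handled by passing to the quotient $\Theta/\partial_x\Theta$ and using the lexicographic normal form from the proof of Lemma~\ref{lembas}: after integration by parts, reduce every $\theta$-monomial to the basis~\eqref{quotbas}, and then the pairing with the $u$-linear factor becomes triangular with respect to that order. A secondary technical nuisance is bookkeeping the signs from the anticommuting $\theta^{(s,t)}$ and from the $(-1)^p$ in~\eqref{defsch}, which I would organise by fixing once and for all an ordering convention on the $\theta$'s and reducing every term to it before comparing coefficients.
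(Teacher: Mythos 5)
There is a genuine gap, and it is structural rather than a matter of bookkeeping. The first issue is with your claim that the coefficient of a well-chosen extremal monomial in $[\cB(\chi),\cB(\chi)]$ is ``up to a nonzero combinatorial factor, a single coefficient $c_{abc}$'': the bracket is \emph{quadratic} in $\chi$, so every coefficient of $[\cB(\chi),\cB(\chi)]$ is a quadratic expression in the $c_{abc}$ (products $c_{abc}c_{a'b'c'}$), and vanishing of such products does not feed a downward induction that kills the coefficients one at a time. The paper's argument has to work harder here: writing $\alpha=\frac{\delta\chi}{\delta\theta}=\sum_i\alpha_i\,\theta^{(i,0)}\theta^{(k-i,0)}$, one takes the two highest indices $s>t$ with $\alpha_s,\alpha_t\neq 0$ and shows, by a careful analysis of which monomials of $\partial_x\Theta^4_{2k-1}$ can hit the component $2\alpha_s\alpha_t\,\theta^{(s,0)}\theta^{(t,0)}\theta^{(k-t,0)}\theta^{(k-s,0)}$, that $\alpha_s\alpha_t=0$ --- a contradiction that leaves at most one nonzero $\alpha_i$ (Lemma~\ref{lem:square}). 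The cancellation danger you flag is real, and resolving it requires the explicit decomposition $\Theta^4_{2k-1}=\cV_1\oplus\cV_2$ and the subspaces $\cW^{(j)}$, not just a leading-term extraction.

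The second and more fundamental issue is that no analysis of $[\cB(\chi),\cB(\chi)]$ alone can close the argument. Using $\frac{\delta\cB(\chi)}{\delta u}=\frac{\delta\chi}{\delta\theta}$ and $[\cB,\frac{\delta}{\delta\theta}]_+=0$ (you come close to the first identity but do not exploit it), the bracket collapses to
\begin{equation}
[\cB(\chi),\cB(\chi)]=-\iint \cB\Bigl(\bigl(\tfrac{\delta\chi}{\delta\theta}\bigr)^2\Bigr)\,dx\,dy,
\end{equation}
so by injectivity of $\cB$ on the quotient the hypothesis is exactly equivalent to $\bigl(\frac{\delta\chi}{\delta\theta}\bigr)^2=0$ in $\bigl(\Theta/\partial_x\Theta\bigr)^4$. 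But there exist nonzero decomposable elements $c\,\theta^{(i,0)}\theta^{(d-i,0)}\in\Theta^2_d$ whose square is identically zero; if $\frac{\delta\chi}{\delta\theta}$ were such an element, the bracket would vanish no matter what $c$ is, and no choice of ``first surviving term'' in the bracket could ever detect $c$. Ruling this out requires a separate, \emph{linear} statement about the map $\frac{\delta}{\delta\theta}\colon\Theta^3_d\to\Theta^2_d$ --- namely that no class in $\bigl(\Theta/\partial_x\Theta\bigr)^3_d$ has variational derivative equal to a nonzero single monomial, proved in the paper (Lemma~\ref{lem:varder}) via a two-step triangular structure of $\frac{\delta}{\delta\theta}$ with respect to the basis of Lemma~\ref{lembas}. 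Your proposal has no analogue of this step, so even if the combinatorics of step (4) were carried out perfectly, the argument would terminate with $\frac{\delta\chi}{\delta\theta}$ proportional to a single monomial rather than with $\chi=0$.
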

\begin{proof}
We have
\begin{align}
[ \cB(\chi) ,\cB(\chi) ] &= 2 \iint \frac{\delta \cB(\chi)}{\delta \theta} \frac{\delta \cB(\chi)}{\delta u} 
= 2 \iint  \frac{\delta \cB(\chi)}{\delta \theta} \frac{\delta \chi}{\delta \theta} 
\\
&= - 2 \iint \cB\left( \frac{\delta \chi}{\delta \theta} \right) \frac{\delta \chi}{\delta \theta} 
= - \iint \cB \left(\frac{\delta \chi}{\delta \theta}  \right)^2,
\label{BBzzz}
\end{align}
where the second and third equalities follow from the simple identities
\begin{equation}
\frac{\delta \cB(\chi)}{\delta u} = \frac{\delta \chi}{\delta \theta} ,
\qquad
[\cB , \frac{\delta }{\delta \theta}]_+ = 0.
\end{equation}
Since we proved that the map 
\begin{equation}
\cB : \left( \frac{\Theta}{\partial_x \Theta}\right)^{p+1}_d \to H^p_d(\hcF) 
\end{equation}
is injective, the vanishing of~\eqref{BBzzz} implies that $\left(\frac{\delta \chi}{\delta \theta}  \right)^2=0$ in $ \left( \frac{\Theta}{\partial_x \Theta}\right)^{4}$. From this fact it follows that $\chi =0$, as we prove in the remaining part of this section\footnote{Notice that this fact, in the case of standard differential polynomials in commuting variables, follows from a simple observation: the derivative in $x$ of a differential polynomial cannot be a square, since it has to be linear in the highest derivative. In the case of anticommuting variables however, a quite involved proof is necessary.}. 

%Consider the space $\Theta_d^p$ spanned by the monomials in $\theta^i:= \partial_x^i \theta$, $i=0,1,2,\dots$. Note the we restrict ourselves for the one independent variable for the moment. 

Let $\mathrm{sq}\colon \Theta^2_k\to \Theta^4_{2k}$ be the map that sends an element $\alpha\in\Theta^2_k$ to $\alpha^2\in \Theta^4_{2k}$. In the rest of this section we will use the notation $\theta^d = \theta^{(d,0)}$.

\begin{lemma} \label{lem:square}
	The intersection of $\mathrm{sq}(\Theta^2_k)$ and $\partial_x\Theta^4_{2k-1}$ is equal to zero. In other words, if $\alpha\in \Theta^2_k$ and $\alpha^2$ is $\partial_x$-exact, then $\alpha^2=0$ and, therefore, $\alpha$ is proportional to a monomial $\theta^i\theta^{k-i}$ for some $i=1,\dots,\lfloor \frac{k-1}2 \rfloor$. 
\end{lemma}
\begin{proof}
A basis in $\Theta_{2k-1}^4$ is given by standard monomials $\theta^{i_1} \theta^{i_2} \theta^{i_3} \theta^{i_4}$ with total degree $i_1 + i_2+ i_3+i_4 = 2k-1$. By {\it standard} monomial we indicate a monomial where the indices are ordered as $i_1 > i_2 > i_3 > i_4 \geq0$ to avoid duplicates. 
 
We can write $\Theta_{2k-1}^4 = \cV_1 \oplus \cV_2$, where a basis for $\cV_1$ is given by standard monomials with the restriction $i_1+i_4 \leq k-1$, and a basis for $\cV_2$ is given by standard monomials with  $i_1 +i_4 \geq k$. 

It is convenient to define also the subspace $\cW$ of $\Theta_{2k}^4$ which is spanned by all monomials that appear in the $\partial_x \cV_1$; more explicitly $\cW$ is generated by the monomials 
\begin{equation}
\theta^{i_1+1} \theta^{i_2} \theta^{i_3} \theta^{i_4}, \quad 
\theta^{i_1} \theta^{i_2+1} \theta^{i_3} \theta^{i_4}, \quad 
\theta^{i_1} \theta^{i_2} \theta^{i_3+1} \theta^{i_4}, \quad 
\theta^{i_1} \theta^{i_2} \theta^{i_3} \theta^{i_4+1},
\end{equation}
with $i_1 > i_2 > i_3 > i_4 \geq0$, $i_1+i_4 \leq k-1$, and $i_1 + i_2+ i_3+i_4 = 2k-1$.

We denote by $\Theta_k^2 \cdot \Theta_k^2$ the subspace of $\Theta_{2k}^4$ spanned by standard monomials $\theta^{i_1} \theta^{i_2} \theta^{i_3} \theta^{i_4}$ with $i_1 > i_2 > i_3 > i_4 \geq0$ and $i_1 + i_2+ i_3+i_4 = 2k$ with $i_1+i_4 = k$ and $i_2 + i_3 = k$. 
It is indeed the subspace given by the product of two arbitrary elements of $\Theta_k^2$.

Clearly, both $\partial_x \cV_1$ and $\Theta_k^2 \cdot \Theta_k^2$ are subspaces of $\cW$.

Let us now prove that $\partial_x \cV_2$ has zero intersection with $\cW$. 
Let $v = \sum_\gamma v_\gamma \,  \gamma$ be an element in $\cV_2$, where $\gamma$ is in the standard basis of $\cV_2$ described above. 
Let $\partial_x v = \sum_\gamma v_\gamma \, \partial_x \gamma \in \cW$.  We have already seen that the elements $\partial_x \gamma$ are linearly independent. If $\gamma = \theta^{i_1} \theta^{i_2} \theta^{i_3} \theta^{i_4}$ then $\partial_x \gamma$ is equal to $\theta^{i_1+1} \theta^{i_2} \theta^{i_3} \theta^{i_4}$ plus  lexicographically lower terms. The lexicographically leading order term is therefore of a standard monomial $\theta^{j_1} \theta^{j_2} \theta^{j_3} \theta^{j_4}$ with $j_1+j_4 \geq k+1$. 
But all basis elements in $\cW$ are standard monomials with $j_1+j_4 \leq k$.
It follows that, if $\gamma$ is the lexicographically highest term in $v$, we must have $v_\gamma=0$. By induction $v$ vanishes. 

The two facts $\partial_x \cV_1 \subseteq \cW$ and $\partial_x \cV_2 \cap \cW = (0)$ imply at once that the preimage $\partial_x^{-1}( \cW )$ in $\Theta_{2k-1}^4$ is contained in $\cV_1$, and the same holds for $\Theta_k^2 \cdot \Theta_k^2$ since it is a subspace of $\cW$, i.e., we have
\begin{equation}
\partial_x^{-1} ( \Theta_k^2 \cdot \Theta_k^2 ) \subseteq \cV_1. 
\end{equation}
Since $\mathrm{sq}(\Theta_k^2) \subseteq \Theta_k^2 \cdot \Theta_k^2$, our original problem reduces to finding the intersection of $\mathrm{sq}(\Theta_k^2)$ and $\partial_x \cV_1$. 

Let $\alpha = \sum_{i = \lceil \frac{k+1}2 \rceil}^{k} \alpha_i\, \theta^i \theta^{k-i}$ be an element of $\Theta_k^2$ whose square is in $\partial_x \cV_1$. We want to show that at most one of the coefficients $\alpha_i$ is non zero. We therefore assume that at least two such coefficients are non zero and show that it leads to a contradiction. Let $s$ be the higher index for which $\alpha_s \not= 0$ and $t <s$ the second higher index for which $\alpha_t \not=0$. 

Denote by $\cW^{(j)}$ the subspace of $ \Theta_k^2 \cdot \Theta_k^2$ spanned by monomials of the form
\begin{equation}
\theta^{i} \theta^{j} \theta^{k-j} \theta^{k-i} \text{ for }  i= k, \dots , j+1 ,
\end{equation}
and denote by $\widetilde{\cW}$ the space spanned by the basis monomials in $\cW$ which are not in $ \Theta_k^2 \cdot \Theta_k^2$.
Notice that
\begin{equation}
 \Theta_k^2 \cdot \Theta_k^2 = \bigoplus_{j=\lceil \frac{k+1}2 \rceil}^{k-1} \cW^{(j)} ,
\end{equation}
and consequently
\begin{equation}
\cW = \widetilde{\cW} \oplus \bigoplus_{j=\lceil \frac{k+1}2 \rceil}^{k-1} \cW^{(j)}.
\end{equation}

Observe that a monomial $\theta^{i} \theta^{j} \theta^{k-j} \theta^{k-i}$ in $\cW^{(j)}$ can appear in the $\partial_x$-image of four different monomials in $\Theta^4_{2k-1}$ but only two of them are elements of $\cV_1$, i.e., 
\begin{equation} \label{twomon}
 \theta^{i-1} \theta^{j} \theta^{k-j} \theta^{k-i} , \quad 
 \theta^{i} \theta^{j} \theta^{k-j} \theta^{k-i-1},
\end{equation}
so we only need to consider these two. 

Notice that a monomial in $\cV_1$ of such form, i.e., $ \theta^{l} \theta^{j} \theta^{k-j} \theta^{k-l-1}$, is mapped by $\partial_x$ to the sum of four monomials, two of which are in $\cW^{(j)}$, i.e.,
\begin{equation}
\theta^{l+1} \theta^{j} \theta^{k-j} \theta^{k-l-1}, \quad 
\theta^{l} \theta^{j} \theta^{k-j} \theta^{k-l},
\end{equation}
and two are in $\widetilde{\cW}$.

%
%
%
%Let us define $\cV_1^{(j)}$ to be the subspace of $\cV_1$ with basis given by basis monomials of $\cV_1$ with the extra condition that $i_2 = j$ and $i_3 = k-j$. Clearly, the two monomials~\eqref{twomon} are in  $\cV_1^{(j)}$.
%%
%Moreover the $\partial_x$-image of each monomial $\theta^{i_1} \theta^{j} \theta^{k-j} \theta^{i_4}$ in  $\cV_1^{(j)}$ is given by the sum of four monomials, two of which are in $\cW^{(j)}$, i.e.,
%\begin{equation}
%\theta^{i_1+1} \theta^{j} \theta^{k-j} \theta^{i_4}, \quad 
%\theta^{i_1} \theta^{j} \theta^{k-j} \theta^{i_4+1},
%\end{equation}
%and two are in $\widetilde{\cW}$.

Since $\alpha^2 \in  \Theta_k^2 \cdot \Theta_k^2$, it can be decomposed in its components $(\alpha^2)_j \in \cW^{(j)}$, and we have in particular that
\begin{equation}
(\alpha^2)_t = 2 \alpha_s \alpha_t \,  \theta^s \theta^t \theta^{k-t} \theta^{k-s} ,
\end{equation}
since we have assumed that $\alpha_i=0$ for $i>s$ and $t<i<s$.

All these observations imply that there must be an element $\beta$ of $\cV_1$ of the form
\begin{equation}
\beta = \sum_{i=k-1}^{t+1} \beta_i \theta^{i} \theta^{t} \theta^{k-t} \theta^{k-i-1}
\end{equation}
such that its image through $\partial_x$ gives $(\alpha^2)_t$ plus some element in $\widetilde{\cW}$.

The lexicographically higher term in $\beta$, i.e., for $i=k-1$, is sent by $\partial_x$ to a term proportional to $\theta^{k} \theta^{t} \theta^{k-t} \theta^{0}$, which does not appear in $(\alpha^2)_t$, therefore $\beta_{k-1}=0$. Proceeding like this we set to zero all the constants $\beta_{k-1}, \dots, \beta_{s}$. 
Similarly, we can proceed from the lower part of the chain and set to zero all the remaining constants $\beta_{t+1}, \dots , \beta_{s-1}$.
But then $\beta=0$, therefore $\alpha_s \alpha_t =0$ and we are led to a contradiction.

We have proved that at most one of the constants $\alpha_i$ can be non zero. In such case $\alpha^2 =0$. The Lemma is proved.	
\end{proof}

\begin{lemma} \label{lem:varder}
	Consider an arbitrary element $\chi\in\Theta^3_d$. If $\frac{\delta \chi}{\delta \theta} = c\cdot \theta^i\theta^{d-i}$ for some $i=0,1,\dots,\lfloor \frac{d-1}{2}\rfloor$, then $c=0$.
\end{lemma}

\begin{proof} 

Consider the basis of $\left( \frac{\Theta}{\partial_x \Theta} \right)^3_d$ given in Lemma~\ref{lembas}, and the basis 
\begin{equation}
 \theta^d\theta^0,\theta^{d-1}\theta^1,\theta^{d-2}\theta^2,\dots
\end{equation}
of $\Theta^2_d$.
For this choice of bases the map $\frac{\delta}{\delta\theta}$ has a two-step triangular structure. In order to explain that, let us consider the two cases of odd and even $d$ separately. 

Consider first the $d=2k+1$ case. 
One can check\footnote{Note that the computation is slightly different in the case $3l=k-1$.} that the variational derivative $\frac{\delta}{\delta \theta}$ of a basis element $ \theta^{k-l+1} \theta^{k-l} \theta^{2l} $, with $3l < k$, is equal to 
\begin{equation}
2 (-1)^{k-l+1} \theta^{d-2l} \theta^{2l} + (d-2l) (-1)^{k-l+1} \theta^{d-2l-1} \theta^{2l+1} 
\end{equation}
plus terms which are of lower lexicographic order.  Notice that the coefficients of the two monomials above are non-vanishing. 

Observe that $\frac{\delta}{\delta\theta} \theta^{k+1}\theta^k\theta^0$ contains the monomials $\theta^d\theta^0$ and $\theta^{d-1}\theta^1$, while the variational derivatives of all other basis elements with $l\geq 1$ can not contain $\theta^d\theta^0$ and $\theta^{d-1}\theta^1$. 
Thus, if $\frac{\delta \chi}{\delta \theta} = c\cdot \theta^i\theta^{d-i}$ for some $i$, then the coefficient of $\theta^{k+1}\theta^k\theta^0$ in $\chi$ has to be equal to zero. 

We can continue this process by induction. Assume that we have already proved that the first $l$ elements of the basis cannot appear in $\chi$. Then the variational derivative of the basis element $ \theta^{k-l+1} \theta^{k-l} \theta^{2l} $ is the only one that contains $\theta^{d-2l} \theta^{2l} $ and $ \theta^{d-2l-1} \theta^{2l+1} $. It follows from the same reason as above, that such basis element cannot appear in $\chi$.

	In the case $d=2k$ we can apply the same reasoning. In this case  the variational derivative $\frac{\delta}{\delta \theta}$ of a basis element $\theta^{k-l} \theta^{k-l-1} \theta^{2l+1}$, with $3l < k-2$, is equal to
\begin{equation}
 2 (-1)^{k-l} \theta^{d-2l-1} \theta^{2l+1} + (d-2l-1) (-1)^{k-l} \theta^{d-2l-2} \theta^{2l+2} 
\end{equation}
plus terms of lower lexicographic order. Notice that $\theta^d\theta^0$ never enters the image of any basis element in $\Theta^3_{d}/\partial_x\Theta^{3}_{d-1}$. Since the coefficients of the two monomials above are non-vanishing, we can
%Then, $\theta^{2k-1-2j}\theta^{2j+1}$ and $\theta^{2k-2-2j}\theta^{2j+2}$, $j\geq 0$, enter with nontrivial coefficients only the the images of the monomials $\theta^{k-i}\theta^{k-i-1}\theta^{2i+1}$ for $i\geq j$. In the case $i=j$, the coefficients are equal to $(-1)^{k-j}2$ and $(-1)^{k-j}(2k-2j-1)$, respectively, that is, both coefficients are non-trivial. This allows us to
 apply the same argument as in the case of odd $d$, mutatis mutandis. 
\end{proof}
%
%\begin{corollary}\label{cor:triviality-dx-square}
%	Consider an arbitrary element $\xi\in\Theta^3_d$. If $(\frac{\delta \xi}{\delta \theta} )^2$ belongs to the image of $\partial_x$, then $\xi$ itself belongs to the image of $\partial_x$. 
%\end{corollary}

Now let us consider an arbitrary element $\chi\in\Theta^3_d$, such that $(\frac{\delta \chi}{\delta \theta} )^2$ belongs to the image of $\partial_x$.
From Lemma~\ref{lem:square} it follows that $\frac{\delta \chi}{\delta \theta} =c\cdot \theta^i\theta^{d-i}$ for some $i=0,1,\dots,\lfloor d/2\rfloor$. Then Lemma~\ref{lem:varder} implies that $\frac{\delta \chi}{\delta \theta} =0$, hence $\chi$ belongs to the image of $\partial_x$.

We have proved that $\chi=0$ as element of $\left( \frac{\Theta}{\partial_x \Theta} \right)^3_d$. Lemma~\ref{nontriv} is proved. 
\end{proof}

\section{The numerical invariants of the Poisson bracket}

In principle all the numerical invariants of a Poisson bracket of the form~\eqref{def}, namely the sequence $(c_1,c_2,\ldots)$, can be extracted iteratively solving order by order for the Miura transformation which eliminates the coboundary terms. 
Providing a general formula for the invariants of a Poisson bivector is hard, since the elimination of each coboundary term affects in principle all the higher order ones and it is necessary to give an explicit form for the Miura transformation. However, the lowest invariants can be computed as follows.

\begin{proposition}\label{numinv}
Consider a Poisson bracket of the form
\begin{equation}\label{Pexp}
\begin{split}
\{ u(x^1,x^2) &, u(y^1,y^2) \}=\{ u(x^1,x^2) , u(y^1,y^2) \}^0 +\\
&+\sum_{k>0} \epsilon^k \sum_{\substack{k_1,k_2 \geq0 \\ k_1 +k_2 \leq k+1}}  A_{k; k_1, k_2}(u(x)) \delta^{(k_1)}(x^1-y^1)  \delta^{(k_2)}(x^2-y^2),
\end{split}
\end{equation}
as in \eqref{def}. Here $A_{k; k_1,k_2} \in \cA$ and $\deg A_{k; k_1,k_2} = k - k_1 - k_2 +1$.
Then the first numerical invariants of the bracket, giving the normal form of Theorem~\ref{maintheorem}, are
\begin{align}\label{c1-thm}
c_1&=A_{2;3,0}, \\\label{c2-thm}
c_2&=A_{4;5,0}(u)-A_{2;3,0}A_{2;2,1}(u).
\end{align}
%\mc{I would give up the idea of explicitly finding $c_3$ -- it does not seem possible to avoid solving for $X_3$ and $X_4$.}
\end{proposition}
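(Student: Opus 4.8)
The plan is to compute the numerical invariants by explicitly carrying out the first couple of steps of the normalization procedure from the proof of Proposition~\ref{step3}, keeping careful track of which coboundary corrections are introduced and how they propagate. First I would translate the bracket \eqref{Pexp} into the $\theta$-formalism: a term $A_{k;k_1,k_2}\,\delta^{(k_1)}(x^1-y^1)\delta^{(k_2)}(x^2-y^2)$ corresponds, via the dictionary of Section~1 and the formula \eqref{pvect}, to the bivector $\tfrac12\iint A_{k;k_1,k_2}\,\theta\,\theta^{(k_1,k_2)}\,dx\,dy$ (up to integration by parts), so $P = \fp_1 + \sum_{k\geq1}\epsilon^k P_{k+1}$ with $P_{k+1}$ the degree-$(k+1)$ component assembled from the $A_{k;k_1,k_2}$. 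Since the genuine expansion parameter tracking degree is $\epsilon$ here matched to standard degree, I will work order by order in standard degree $2,3,4,\dots$.

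Next I would run the normalization. In standard degree $2$ there is nothing to do ($H^2_2$ consists only of $\cB$-terms and the $[P,P]=0$ equation forces $P_2$ to be a coboundary, which the Miura transformation $e^{\ad_{X_1}}$ removes). The first invariant appears in degree $3$: after killing the degree-$2$ coboundary, the degree-$3$ cocycle $P_3$ decomposes as $c_1\fp_3 + Q_3 + \ad_{\fp_1}X_2$ with $Q_3 \in \cB(\Theta/\partial_x\Theta)^3_3$, and by Lemma~\ref{nontriv} the self-compatibility forces $Q_3=0$; reading off the coefficient of $\theta\theta^{(3,0)}$ in $P_3$ — which is untouched by $\ad_{\fp_1}X_2$ and by the degree-$2$ Miura transformation since $X_1$ is linear and $\fp_1$ has no $u$-dependence — gives $c_1 = A_{2;3,0}$. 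This is \eqref{c1-thm}. For $c_2$ I must go to degree $5$, which requires: (a) explicitly computing the Miura vector fields $X_1,X_2,X_3,X_4$ that normalize degrees $2,3,4$; (b) applying $e^{\ad_{X_4}}e^{\ad_{X_3}}e^{\ad_{X_2}}e^{\ad_{X_1}}$ to $P$ and extracting the degree-$5$ term; (c) projecting onto the $\fp_5$-component, i.e., the coefficient of $\theta\theta^{(5,0)}$ modulo $\partial_x$ and modulo $\ad_{\fp_1}$-coboundaries. The key computation is that the degree-$4$ normalization (removing $P_4$, necessarily a pure $\cB$-coboundary by Corollary~\ref{corcoh}) contributes a cross term $[X_3, \fp_1 + \cdots]$ whose degree-$5$ part, together with the original $A_{4;5,0}$ and the Schouten bracket $[\fp_3, \text{(something of degree }3)]$ mixing $c_1 = A_{2;3,0}$ with the mixed derivative term $A_{2;2,1}$, produces exactly the correction $-A_{2;3,0}A_{2;2,1}$.

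The main obstacle is step (b)--(c) for $c_2$: one must actually produce the intermediate Miura transformations in low degree and chase the Schouten-Nijenhuis brackets, because — as noted in the text preceding the Proposition — eliminating a coboundary in degree $k$ perturbs all higher degrees, and in particular the removal of the degree-$4$ coboundary feeds back nonlinearly into degree $5$ via $\tfrac12[X_3,[X_3,\fp_1]]$-type terms and via $[X_3, P_2]$-type terms, while the term $[\fp_3, \cdot]$ couples $c_1$ to the degree-one-in-$y$ piece $A_{2;2,1}$. I would organize this by working modulo $\ad_{\fp_1}$-exact terms throughout (so that only the cohomologically meaningful combinations survive), using the explicit representatives from Section~\ref{sec:poisson} to identify the $\fp_5$-component, and using the observation that terms with positive $y$-degree decouple from the pure-$x$ terms $\fp_{2k+1}$ (exactly as exploited in the proof of Proposition~\ref{step3}) to discard most contributions. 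The residual finite computation then yields \eqref{c2-thm}; I would present it as a direct verification, perhaps illustrated on a concrete example (the paper's Example~\ref{eg} is referenced for this purpose) to confirm the sign and the coefficient.
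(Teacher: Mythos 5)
Your proposal takes essentially the same route as the paper: normalize order by order via Miura transformations, read $c_1$ off the $y$-degree-zero part of $P_3$ (since $[X_2,\fp_1]$ has positive $y$-degree), and obtain $c_2$ by pushing the Miura corrections into degree $5$ and projecting onto $\fp_5$, where the only surviving cross term is $-c_1[X_2,\fp_3]$ with the characteristic of $X_2$ carrying the coefficient $\tfrac12 A_{2;2,1}(u)\,\partial_x^2 u$. The few stray feedback terms you flag (e.g.\ $\tfrac12[X_3,[X_3,\fp_1]]$, which actually contributes only in degree $7$, and $[X_3,P_2]$, where $P_2$ has already been removed) would simply be found absent when the finite computation is carried out, so the plan is sound and matches the paper's argument.
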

Notice that $A_{2;3,0}$ is implied to be a constant.
\begin{proof}
We recall that, given a Poisson bracket $P$ of form \eqref{def}, it can be expanded according to its differential order. For notational compactness, we will denote
\begin{equation}\label{Pexp-pf}
P_{k+1}:=\sum_{\substack{k_1,k_2 \geq0 \\ k_1 +k_2 \leq k+1}}  A_{k; k_1, k_2}(u(x)) \delta^{(k_1)}(x^1-y^1)  \delta^{(k_2)}(x^2-y^2)
\end{equation}
for $k>0$, so that $\deg P_k=k$.

In this proof, we replace $(x^1,x^2)$ with $(x,y)$ as we did in the previous sections; moreover, with a slight abuse of notation we identify the Dirac's delta derivatives with the corresponding elements of $\hcF$ previously used
\begin{align*}
\fp_1&:=\delta(x^1-y^1)\delta^{(1)}(x^2-y^2)&\fp_k&:=\delta^{(k)}(x^1-y^1)\delta(x^2-y^2).
\end{align*}

Using this notation, the Schouten identity $[P,P]=0$ reads
\begin{equation}\label{eqSch-pf}
2[\fp_1,P_k]+\sum_{l=2}^{k-1}[P_l,P_{k-l+1}]=0
\end{equation}
for $k\geq2$. The first equation is $[\fp_1,P_2]=0$; we solved it in \cite{ccs15}, finding for $P_2$
\begin{align*}
A_{1;2,0}&=0&A_{1;1,1}&=0&A_{1;0,2}&=0\\
A_{1;1,0}&=-f(u)\partial_{y}u&A_{1;0,1}&=f(u)\partial_{x}u&A_{1;0,0}&=0
\end{align*}
for any function $f(u)$. Since $H^2_2(\hcF)=0$, we have $P_2=[X_1,\fp_1]$ and the Miura transformation that eliminates $P_2$ from $P$ is $e^{-\ad_{\epsilon X_1}}$. The evolutionary vector field $X_1$ has characteristic
\begin{equation}
X_1(u)=  F(u)\partial_{x}u
\end{equation}
where $F(u)=\int^u f(s) ds$. We also observe that $\ad_{X_1}^m\fp_1=0$ for $m>1$.

We apply the Miura transformation generated by $-\epsilon X_1$ to $P$ and get
\begin{equation}
\begin{split}
\tilde{P}=e^{-\ad_{\epsilon X_1}}P&=\fp_1+\epsilon^2 P_3+\epsilon^3\left(P_4-[X_1,P_3]\right)+\\
&+\epsilon^4\left(P_5-[X_1,P_4]+\frac{1}{2}[X_1,[X_1,P_3]]\right)+\cdots
\end{split}
\end{equation}
The first equation of the system \eqref{eqSch-pf} for $\tilde{P}$, and the results used in the proof of Lemma~\ref{step3} give us $P_3=c_1\fp_3+[X_2,\fp_1]$.

$[X_2,\fp_1]$ is a bivector whose degree in the number of derivatives w.r.t. $x^2$ is at least 1; notice that $x^1$ corresponds to $x$ and $x^2$ corresponds to $y$, in the notation of Section~\ref{sec:poisson} and \ref{sec:main}. Hence, we can write
\begin{equation}\label{P3-pf}
\begin{split}
P_3&=A_{2;3,0}(u)\fp_3+A_{2;2,1}(u)\delta^{(2)}(x^1-y^1)\delta^{(1)}(x^2-y^2)\\&\quad+A_{2;1,2}(u)\delta^{(1)}(x^1-y^1)\delta^{(2)}(x^2-y^2)+A_{2;0,3}(u)\delta(x^1-y^1)\delta^{(3)}(x^2-y^2)\\&\quad+\cdots\\
&=c_1\fp_3+[X_2,\fp_1]
\end{split}
\end{equation}
This equation immediately gives $A_{2;3,0}(u) = A_{2;3,0} = c_1$ as in \eqref{c1-thm}. Moreover, we can solve it for $X_2$; the characteristic of the evolutionary vector field is a differential polynomial with top degree w.r.t. the $x$ derivatives is $1/2\,A_{2;2,1}(u)\partial_{x}^2u+\tilde{A}(u)\left(\partial_{x}u\right)^2$. Here we are interested only in first summand because it is the one that gives the highest number of $x$-derivatives in $[X_2,\fp_r]$, for any $r$.

We apply to $\tilde{P}$ the Miura transformation $e^{-\ad_{\epsilon^2 X_2}}$ to eliminate the coboundary term of $P_3$ and are left with
\begin{equation}\label{Ptildetilde-pf}
\begin{split}
e^{-\ad_{\epsilon^2X_2}}\tilde{P}&=\fp_1+\epsilon^2c_1\fp_3+\epsilon^3\left(P_4-c_1[X_1,\fp_3]-[X_1,[X_2,\fp_1]]\right)+\\
&+\epsilon^4\left(P_5-[X_1,P_4]+\frac{1}{2}c_1[X_1,[X_1,\fp_3]]+\frac{1}{2}[X_1,[X_1,[X_2,\fp_1]]]\right.\\
&\left.-c_1[X_2,\fp_3]-\frac{1}{2}[X_2,[X_2,\fp_1]]\right)+\cdots
\end{split}
\end{equation}
We now use the fact that $H^2_4(\hcF)=0$ to get
$$
P_4=c_1[X_1,\fp_3]+[X_1,[X_2,\fp_1]]+[X_3,\fp_1]
$$
for some homogeneous vector field $X_3$ of degree $3$. This allows us to replace $P_4$ in \eqref{Ptildetilde-pf} and to apply the Miura transform $e^{-\ad_{\epsilon^3 X_3}}$ to it to get rid of the term $\epsilon^3$ in the expansion. The terms of order $<3$ are left unaffected by this transformation, while the coefficient of $\epsilon^4$ becomes
\begin{multline}
P_5-[X_1,[X_3,\fp_1]]-\frac{1}{2}c_1[X_1,[X_1,\fp_3]]-\frac{1}{2}[X_1,[X_1,[X_2,\fp_1]]]-\\
-c_1[X_2,\fp_3]-\frac{1}{2}[X_2,[X_2,\fp_1]]=c_2\fp_5+[X_4,\fp_1]
\end{multline}
where the equality is given by our results about $H^2_5(\hcF)$ and the proof of Lemma~\ref{step3}. The invariant $c_2$ must be read taking the coefficient of $\fp_5$ in the left hand side of the equation: this coefficient cannot be obtained by summands that are of $y$-degree bigger or equal to 1. Thus we focus on the summands
$$
P_5-\frac{1}{2}[X_1,[X_1,\fp_3]]-c_1[X_2,\fp_3]=c_2\fp_5+\cdots.
$$
A direct computation shows that in $\ad_{X_1}^2\fp_3$ the term $\fp_5$ does not appear, while it does appear in $[X_2,\fp_3]$. Using the form of $X_2$ we have previously derived, we find
$$
P_5=(A_{4;5,0}(u)\fp_5+\cdots)=\left(c_2+c_1A_{2;2,1}(u)\right)\fp_5+\cdots
$$
from which we get \eqref{c2-thm}.
\end{proof}

\begin{example}\label{eg}
We can compute all the numerical invariants when the Poisson bracket is particularly simple. Let us consider the bracket 
\begin{multline}\label{pb-ex}
\{u(x),u(y)\}=\delta(x^1-y^1)\delta'(x^2-y^2)+\delta'''(x^1-y^1)\delta(x^2-y^2)\\+\delta''(x^1-y^1)\delta'(x^2-y^2).
\end{multline}
Proposition~\ref{numinv} immediately tells us that $c_1=1$ and $c_2=-1$. Let us denote for brevity $\fp_{s,t}$ the bivector corresponding to $\frac{1}{2}\int\theta\theta^{(s,t)}$. The bivector corresponding to the bracket then reads $P=\fp_1+\fp_3+\fp_{2,1}$, and $\fp_{2,1}=\ad_{X_2}\fp_1$. It is very easy to derive $X_2=\frac{1}{2}u_{2x}\theta$. We have $\ad_{X_2}\fp_{s,t}=\fp_{s+2,t}$. The Miura transformation  $e^{-\ad_{X_2}}$ applied to $P$ gives
\begin{align}\notag
P_{(1)}&=\fp_1+\sum_{n=0}^\infty\frac{(-1)^n}{n!}\ad_{X_2}^n\fp_3+\sum_{n=1}^\infty(-1)^n\left(\frac{1}{n!}-\frac{1}{(n+1)!}\right)\ad_{X_2}^{n+1}\fp_1\\
&=\fp_1+\sum_{n=0}^\infty\frac{(-1)^n}{n!}\fp_{3+2n}+\sum_{n=1}^\infty(-1)^n\left(\frac{1}{n!}-\frac{1}{(n+1)!}\right)\fp_{2n+2,1}
\end{align}
Notice that the term $n=0$ in the first sum gives the only contribution of order 3, giving $c_1=1$. The further $\fp_1$-coboundary term should be read in the $n=1$ term of the second sum, namely for $-\frac{1}{2}\fp_{4,1}=\ad_{X_4}\fp_1$.
The next Miura transformation leads to
\begin{align*}
P_{(2)}&=\fp_1+\sum_{m=0}^\infty\sum_{n=0}^\infty\frac{(-1)^{n+2m}}{2^m m!n!}\fp_{2n+4m+3}+\sum_{m=1}^\infty\frac{(-1)^{2m}}{2^mm!}\fp_{4m,1}\\
&\quad+\sum_{m=0}^\infty\sum_{n=2}^\infty\left(\frac{1}{n!}-\frac{1}{(n-1)!}\right)\frac{(-1)^{n+2m}}{2^mm!n!}\fp_{2n+4m,1}.
\end{align*}
The procedure goes on -- always requiring us to find the vector field cancelling the lowest order term of the form $\fp_{s,1}$. At each step, we will need vector fields $X_{2s+2}$ such that
$$\ad_{X_{2s}}\fp_1=\frac{(-1)^{s+1}}{s}\fp_{2s,1}$$
and we obtain
$$P_{(\infty)}=\left(\prod_{s=1\ldots\infty}^{\curvearrowleft} e^{-\ad_{X_{2s}}}\right)\,P.$$
The Miura transformation cancels all the terms of the form $\fp_{s,1}$ and we are left with the following expression for the Poisson bivector brought to the normal form:
\begin{equation}
P_{(\infty)}=\fp_1+\sum_{m_1,m_2,\dots=0}^{\infty}\frac{(-1)^{m_1+2m_2+3m_3+\cdots}}{m_1!m_2!m_3!\cdots 2^{m_2}3^{m_3}\cdots}\fp_{3+2m_1+4m_2+6m_3+\cdots}
\end{equation}
We recall that $\frac{1}{2}\int\theta\partial_x^k\theta=\fp_k$. Hence, the infinite sum can be seen as a series expansion for $\frac{1}{2}\int\theta\partial_x^3/(1+\partial_x^2)\,\theta$ as follows:
\begin{multline*}
\frac{1}{2}\int\theta\partial_x^3\left(\sum_{m_1=0}^\infty\frac{(-1)^{m_1}}{m_1!}\partial_x^{2m_1}\right)\left(\sum_{m_2=0}^\infty\frac{(-1)^{2m_2}}{2^{m_2}m_2!}\partial_x^{4m_2}\right)\left(\sum_{m_3=0}^\infty\frac{(-1)^{3m_3}}{3^{m_2}m_3!}\partial_x^{6m_3}\right)\cdots\theta\\=
\frac{1}{2}\int\theta\left(\partial_x^3\, e^{-\partial_x^2+\frac{\partial_x^4}{2}-\frac{\partial_x^6}{3}+\cdots}\right)\theta=\frac{1}{2}\int\theta\,\partial_x^3\, e^{-\log(1+\partial_x^2)}\theta=\frac{1}{2}\int\theta\,\frac{\partial_x^3}{1+\partial_x^2}\theta.
\end{multline*}
We stress the fact that all these identities should always been understood in terms of formal power expansion. On the other hand, a more obvious expansion for the same expression is
$$
\frac{1}{2}\int\theta\,\frac{\partial_x^3}{1+\partial_x^2}\theta=\frac{1}{2}\int\theta\partial_x^3\sum_{k=0}^\infty(-1)^k\partial_x^{2k}\theta ,
$$
that translates into
\begin{equation}
P_{(\infty)}=\fp_1-\sum_{k=1}^\infty(-1)^k\fp_{2k+1} ,
\end{equation}
and gives us all the numerical invariants of \eqref{pb-ex}.
\end{example}

\appendix

\end{document}